\documentclass[12pt]{article}
\usepackage[utf8]{inputenc}
\usepackage{chapterbib}
\usepackage[sectionbib,numbers]{natbib}
\usepackage{eurosym}
\usepackage{amstext,amsthm}
\usepackage{amsmath}
\usepackage{amssymb,mathtools}
\usepackage[nointegrals]{wasysym} 
\usepackage{mathrsfs}
\usepackage{hyperref}
\usepackage{graphicx}

\newtheorem{proposition}{Proposition}[section]
\newtheorem{corollary}[proposition]{Corollary}
\newtheorem{theorem}{Theorem}
\newtheorem{lemma}[proposition]{Lemma}
\theoremstyle{definition}
\newtheorem{definition}[proposition]{Definition}
\newtheorem{remark}[proposition]{Remark}
\newtheorem{assumption}[proposition]{Assumption}

\pagestyle{headings}

\setlength{\bibsep}{0pt}

\setlength{\textheight}{230mm}
\setlength{\topmargin}{-15mm} 
\setlength{\textwidth}{17cm} 
\setlength{\oddsidemargin}{-3mm}
\setlength{\columnseprule}{.1pt}
\setlength{\columnsep}{20pt}

\addtolength{\bibsep}{1.5mm}


\usepackage[normalem]{ulem}
\usepackage{color}

\DeclareMathAlphabet{\mathpzc}{OT1}{pzc}{m}{it}


\newcommand{\bE}{\mathbb{E}}

\usepackage{times}
\numberwithin{equation}{section}

\newcommand\unnumberedfootnote[1]{ %
        \let\temp=\thefootnote %
        \renewcommand{\thefootnote}{}%
        \footnote{#1}%
        \let\thefootnote=\temp%
        \addtocounter{footnote}{-1}}

\begin{document}

\setcounter{page}{1}
\title{\LARGE Mean-field limits for non-linear Hawkes processes with excitation and inhibition}

\author{{\sc by  P. Pfaffelhuber, S. Rotter and J. Stiefel} \\[2ex]
  \emph{Albert-Ludwigs University Freiburg} } \date{\today}

\maketitle

\unnumberedfootnote{\emph{AMS 2000 subject classification.} {\tt 60G55}
  (Primary) {\tt, 60F05} (Secondary).}

\unnumberedfootnote{\emph{Keywords and phrases.} Multivariate Hawkes process; Volterra equation; spike train}

\begin{abstract}
  \noindent
  We study a multivariate, non-linear Hawkes process $Z^N$ on the complete graph with $N$ nodes. Each vertex is either excitatory (probability $p$) or inhibitory (probability $1-p$). We take the mean-field limit of $Z^N$, leading to a multivariate point process $\bar Z$. If $p\neq\tfrac12$, we rescale the interaction intensity by $N$ and find that the limit intensity process solves a deterministic convolution equation and all components of $\bar Z$ are independent. In the critical case, $p=\tfrac12$, we rescale by $N^{1/2}$ and obtain a limit intensity, which solves a stochastic convolution equation and all components of $\bar Z$ are conditionally independent.
\end{abstract}

\section{Introduction}
In \cite{Hawkes1971a, HawkesOakes1974}, Hawkes processes were introduced as self-excitatory point processes. Today, they are used in
various fields of applications including seismology \cite{Ogata1988,
  Fox2016}, interactions in social networks \cite{Zipkin2016,
  lukasik-etal-2016-hawkes}, finance \cite{Bacry2015, Hawkes2018} and
neuroscience \cite{pmid28234899, pmid25974542}. In the classical univariate,
linear Hawkes process, the firing rate at time $t$ is a linear
function of $I_t := \sum_i \varphi(t-T_i)$, where the $T_i$'s are
previous jump times.  In this case, since rates cannot become
negative, $\varphi\geq 0$ is required, leading to a self-excitatory process.

Our main motivation to study Hawkes processes comes from the neurosciences. In a graph, vertices model neurons, whereas the (directed) edges are synapses linking the neurons. A point process indexed by the vertices models action potentials or spike trains of electrical impulses. Communication via synapses leads to correlated point processes such that each spike in one neuron influences the rate by which a neighboring vertex fires. In this field, it is known that neurons cannot only excite others, but inhibition is another important factor (see e.g.\ \cite{Kandel-2012-Book}). The main goal of this paper is to obtain limit results for the multivariante, non-linear Hawkes process, where the firing rate at time $t$ is $h(I_t)$ and $\varphi \leq 0$ can occur as well, which we
interpret as inhibition.

Nonlinear Hawkes processes have been studied to some extent in the past decades. \cite{BremaudMassoulie1996,Massoulie1998} focus on ergodic properties using Lipschitz conditions of the transfer function $h$.  In \cite{Zhu2013, Zhu2015}, central limit theorems and large deviation results for a univariate nonlinear Hawkes process are given. The case of inhibition in a multivariate setting was studied in \cite{Chen2019a} using a thinning process representation, while \cite{Costa2020} uses renewal techniques to establish limit theorems in the univariate setting.

Mean-field models have frequently been applied in the life sciences; see e.g.\ \cite{DaiPra2017} where particular applications in neuroscience are discussed. Mean-field limits of nonlinear Hawkes processes were first studied by \cite{delattre2016hawkes}. This has been extended to age-dependent nonlinear Hawkes processes by \cite{Chevallier2017a, Chevallier2017b}, and to specific models including multiple classes \cite{DitlevsenLoecherbach2017}. In addition to the law of large numbers in the mean-field model, a central limit theorem is proved in {\color{black} \cite{Chevallier2017b, HeesenStannat2020}}.  Another branch of research studies a mean-field limit of a spatially extended (geometric) Hawkes process, showing a law of large numbers \cite{Chevallier2019} and central limit theorem \cite{Chevallier2020}. Here, a first proof of the neural field equation could be obtained.

~

In the present paper, our approach is a combination and extension of {\color{black} \cite{Chevallier2017a},\cite{HeesenStannat2020} and \cite{erny2020mean}}, with a special focus on models including excitatory and inhibitory neurons/vertices. We assume that each vertex/neuron is either excitatory or inhibitory, i.e.\ excites or inhibits all of its neighbors. We will denote by $p$ the fraction of excitatory vertices/neurons and distinguish the critical case $p=1/2$ from the non-critical one. For the latter, we obtain in Theorem~\ref{T1} a classical mean-field result, i.e.\, by rescaling the interaction intensity by $N$, a deterministic limit of the intensity and independent point processes driven by this intensity arises. We also provide a central limit result for the intensity. In Theorem~\ref{T2}, we are dealing with the critical case. Here, we rescale the interaction intensity by $N^{1/2}$, leading to a limiting intensity which is itself stochastic, and drives conditionally independent point processes. {\color{black} We will discuss connections of our findings to previous work in Remarks~\ref{rem:T1} and~\ref{rem:T2a}}.

~

The mean-field model we discuss here is unrealistic for applications in neuroscience for various reasons. Above all, not all neurons are connected. Sparse graphs are one possibility for a more realistic model \cite{pmid10809012}, while multiple classes of differently interconnected neurons are another option \cite{DitlevsenLoecherbach2017}. For a detailed discussion on implications of our findings for neuroscience, see Section~\ref{S:discuss}.

\section{Model and main results}
We use the following general model for a non-linear Hawkes process:

\begin{definition}[Multi-variate, non-linear Hawkes process]
  \label{def:hawkes}
  Let $\mathbb G = (\mathbb V, \mathbb E)$ be some finite, directed graph, and write $ji \in \mathbb E$ if $j\to i$ is an edge in $\mathbb G$. 
  Consider a family of measurable, real-valued functions
  $(\varphi_{ji})_{ji \in \mathbb E}$ and a family of real-valued, non-negative functions
  $(h_i)_{i\in \mathbb V}$.  Then, a  point process $Z = (Z^i)_{i\in\mathbb V}$ (with state space $\mathbb N_0^{\mathbb V}$) is a multi-variate
  non-linear Hawkes process with interaction kernels $(\varphi_{ji})_{ji \in \mathbb E}$ and transfer functions $(h_i)_{i\in\mathbb V}$, if $Z^i, Z^j$ do not jump simultaneously for $i\neq j$, almost surely, and the compensator of $Z^i$ has the form $(\int_0^t \lambda_s^i ds)_{t\geq 0}$ with
  $$ \lambda_t^i := \lambda_t^i(Z_{s<t})
  := h_i\Big(\sum_{j: ji \in \mathbb E} \int_0^{t-}
  \varphi_{ji}(t-s)dZ_s^j\Big), \qquad i\in\mathbb V.$$ 
\end{definition}

\noindent
We need some mimimal conditions such that the multivariate, non-linear Hawkes process is well-defined (i.e.\ exists). As mentioned in \cite{delattre2016hawkes}, Remark~5, the law of the non-linear Hawkes process is well-defined, provided that the following assumption holds. 

\begin{assumption}\label{ass:basic}
  All interaction kernels $\varphi_{ji}, ji \in \mathbb E$ are locally integrable, and all transfer functions $(h_i)_{i\in\mathbb V}$ are Lipschitz continuous. 
\end{assumption}

\begin{remark}[Interpretation and initial condition]
\begin{enumerate}
\item   If $dZ_s^j = 1$, we call   $\varphi_{ji}(t-s) dZ_s^j$ the influence of the point at time $s$ in vertex $j$ on vertex $i$.
\item Consider the case of monotonically increasing  transfer functions. If $\varphi_{ji} \leq 0$, we then say that vertex $j$ inhibits $i$, since any point in $Z^j$ decreases the jump rate of $Z^i$. Otherwise, if $\varphi_{ji}\geq 0$, we say that $j$ excites $i$.
\item   In our formulation, we have $Z_0^i = 0, i\in\mathbb V$,     with the consequence that the $dZ_u^j$-integral in \eqref{eq:Z_ref} could also be extended to $-\infty$ without any     change. We note that it would also be possible to use some     initial condition, i.e.\ some (fixed) $(Z_t^i)_{i\in \mathbb     V, t\leq 0}$, and extend the integral to the negative reals. 
\end{enumerate}
\end{remark}

~

\noindent
Let us now come to the mean-field model, where we now fix some basic assumptions. Note that we will show convergence for large graphs, i.e.\ all processes come with a scaling parameter $N$, which determines the size of the graph.

\begin{assumption}[Mean-field setting]\label{ass1}
  Let
  \begin{enumerate}
  \item $\mathbb G_N = (\mathbb V_N, \mathbb E_N)$ be the complete graph on $N$ vertices, i.e.\ $\mathbb V_N = \{1,...,N\}$  and $ji, ij \in\mathbb E$ for all $i,j \in \mathbb V$;
  \item $h_i = h$ for all $i\in\mathbb V_N$ where $h \geq 0$ is
    bounded, $h$ and $\sqrt{h}$ are Lipschitz with constant $h_{Lip}$;
  \item $\varphi_{ji} = \theta_N U_j \varphi$ for all
    $j,i \in \mathbb V_N$, where $U_1, U_2,...$ are iid with
    $\mathbb P(U_1 = 1) = 1-\mathbb P(U_1 = -1) = p$,
    $\varphi \in \mathcal C_b^1([0,\infty))$, the set of bounded
    continuously differentiable functions with bounded derivative, and
    $\theta_N \in \mathbb R$.
  \end{enumerate}
\end{assumption}

\noindent
The form of $\varphi_{ji}$ implies that node~$j$ is exciting all other nodes with probability $p$, and inhibiting all other nodes with
probability $1-p$. By the law of large numbers, we have that
\begin{align}\label{eq:LLN}
  & \frac{1}{N} \sum_{j=1}^N U_j \xrightarrow{N\to\infty} 2p-1
    \intertext{almost surely, and by the
    central limit theorem,}\label{eq:CLT}
  & \frac{1}{\sqrt{N}} \sum_{j=1}^N (U_j+1-2p) =: W_N \xRightarrow{N\to\infty} W \sim N(0,4p(1-p)),
\end{align}
where $\Rightarrow$ denotes weak convergence. For convergence, this suggests to use $\theta_N = \tfrac 1N$ for $p\neq \tfrac 12$ and
$\theta_N = \tfrac{1}{\sqrt{N}}$ for $p=\tfrac 12$.

\section{Results on the mean-field model}
Our main goal is to give a limit result on the family $Z^N$, the multivariate, non-linear Hawkes process on the graph $\mathbb G_N$ with interaction kernels and transfer functions as given in Assumption~\ref{ass1}. We distinguish the cases $p\neq \tfrac 12$ (Theorem~\ref{T1}) and $p=\tfrac 12$ (Theorem~\ref{T2}). In both cases, the limit compensator is given by $(\int_0^t h(I_s)ds)_{t\geq 0}$, where $I$ is the weak limit of $I^N$ (see \eqref{eq:IN} and \eqref{eq:IN2}), and we find the following limit results: For $p\neq \tfrac 12$, we have that $(I_t)_{t\geq 0}$ follows a linear, deterministic convolution equation, and all components of the limit of $Z^N$ are independent. For $p=\tfrac 12$, the critical case, the process $(I_t)_{t\geq 0}$ is the unique solution of a stochastic convolution equation and all components of $Z^N$ are conditionally independent given $I$. Below, we denote by $\Rightarrow$ weak convergence in $\mathcal D_{\mathbb R^n}([0,\infty))$, the space of cadlag paths, which is equipped with the Skorohod topology; see e.g.\ Chapter~3 in \cite{EthierKurtz86}. The proof of the following result can be found in Section~\ref{ss:proofT1}.

\begin{theorem}[Mean-field limit of multi-variate non-linear Hawkes processes, $p\neq \tfrac 12$] \label{T1} Let Assumption~\ref{ass1} hold with $p\neq \tfrac12$ and $\theta_N = \tfrac 1N$. Let $Z^N = (Z^{N,1}, ..., Z^{N,N})$ be the multivariate, non-linear Hawkes process from Definition~\ref{def:hawkes}, and
\begin{align}\label{eq:IN}
    I^N(t) := \frac 1N \sum_{j=1}^N\int_0^{t-} U_j \varphi(t-s) dZ_s^{j,N}.
\end{align}
\begin{enumerate}
  \item Then, $I^N \xrightarrow{N\to\infty} I$, uniformly on compact
    time intervals in $L^2$, where $I = (I_t)_{t\geq 0}$ is the unique
    solution of the integral equation
    \begin{align}
      \label{eq:IE}
      I_t = (2p-1) \int_0^t \varphi(t-s) h(I_s)ds.
    \end{align}
  \item For all $n=1,2,...$,
    $(Z^{N,1},...,Z^{N,n}) \xRightarrow{N\to\infty} (\bar Z^1,...,\bar
    Z^n),$ 
    where $\bar Z^1, ..., \bar Z^n$ are independent and $\bar Z^i$ is a simple point process with intensity at time $t$
    given by $h(I_t)$, $i=1,...,n$. It is possible to build
    $\bar Z^1, ..., \bar Z^n$, such that the convergence is almost surely (in Skorohod-distance).
  \item Assume that $h \in \mathcal C^1(\mathbb R)$ and that $h'$ is bounded and Lipschitz. Then
    $\sqrt{N}(I^N - I) \xRightarrow{N\to\infty} K$, where $K$ is the
    unique (strong) solution of
    \begin{equation}
      \label{eq:SDE_K}
      \begin{aligned}
      K_t & = \int_0^t \varphi(t-s)  dG_s,
      \\
      G_t & = \int_0^t(Wh(I_s) + (2p-1) h'(I_s) K_s)ds
      + \int_0^t \sqrt{h(I_s)} dB_s,
            \end{aligned}
    \end{equation}
    where $W$ is given by \eqref{eq:CLT} and $B$ is a Brownian motion independent from $W$.
  \end{enumerate}
\end{theorem}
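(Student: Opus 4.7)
The plan hinges on a key simplification from Assumption~\ref{ass1}: because the graph is complete and $h$ is the same at every vertex, every component $Z^{N,i}$ has stochastic intensity $h(I^N(t-))$, so all vertex-dependence collapses into the single aggregate $I^N$. Writing $dZ^{N,j}_s = h(I^N(s-))ds + dM^j_s$ with orthogonal counting-process martingales $M^j$ gives, with $\bar U_N := N^{-1}\sum_j U_j$,
\begin{align*}
    I^N(t) = \bar U_N \int_0^t \varphi(t-s) h(I^N(s)) ds + \frac{1}{N} \sum_{j=1}^N U_j \int_0^{t-} \varphi(t-s) dM^j_s,
\end{align*}
which is the central identity driving all three parts.

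For (i), subtract \eqref{eq:IE} and decompose the difference into an LLN error $(\bar U_N - (2p-1))\int_0^t \varphi\, h(I^N) ds$, a Lipschitz-$h$ error, and the martingale term. Orthogonality of the $M^j$, $U_j^2 = 1$ and boundedness of $h$ give $L^2$-norm $O(N^{-1/2})$ for the martingale, and the LLN error is also $O(N^{-1/2})$; Gronwall on $\sup_{s\le t}\bE(I^N(s)-I_s)^2$ then closes the estimate, while existence and uniqueness of $I$ come from Picard iteration. For (ii), I use Poisson thinning: take iid Poisson random measures $\pi^i$ on $[0,\infty)^2$ with Lebesgue intensity and set $dZ^{N,i}_s = \ind{u \le h(I^N(s-))}\pi^i(ds\,du)$ and $d\bar Z^i_s = \ind{u \le h(I_s)}\pi^i(ds\,du)$. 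The processes $\bar Z^i$ are then independent inhomogeneous Poisson processes with rate $h(I_s)$ by construction, and the expected number of jumps on which $Z^{N,i}$ and $\bar Z^i$ disagree on $[0,T]$ is bounded by $\bE\int_0^T|h(I^N(s))-h(I_s)|ds \to 0$ by (i) and Lipschitzness of $h$, giving almost-sure Skorohod convergence along a subsequence.

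For (iii), a first-order Taylor expansion of $h$ around $I_s$ in the identity above yields a Volterra equation for $K^N := \sqrt N(I^N - I)$,
\begin{align*}
    K^N_t = W_N\!\int_0^t \varphi(t-s) h(I^N(s)) ds + (2p-1)\!\int_0^t \varphi(t-s) h'(I_s) K^N_s ds + L^N_t + R^N_t,
\end{align*}
with $L^N_t := N^{-1/2}\sum_j U_j \int_0^{t-}\varphi(t-s)dM^j_s$ and a remainder $R^N_t$ governed by the Lipschitz property of $h'$. The programme is to (a) prove joint convergence $(W_N, L^N) \Rightarrow (W, L)$ with $L_t = \int_0^t \varphi(t-s)\sqrt{h(I_s)} dB_s$, $B$ a Brownian motion independent of $W$; (b) establish tightness of $K^N$ in $\mathcal D_{\mathbb R}([0,\infty))$; (c) identify the limit via uniqueness of the linear Volterra equation. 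For (a) I write $L^N_t = \int_0^{t-}\varphi(t-s)d\tilde M^N_s$ with $\tilde M^N_s := N^{-1/2}\sum_j U_j M^j_s$; conditionally on $(U_j)$ the $M^j$ are orthogonal and $U_j^2 = 1$, so $\langle \tilde M^N\rangle_s \to \int_0^s h(I_u)du$ by~(i) and the Rebolledo martingale CLT gives $\tilde M^N \Rightarrow \int_0^\cdot\sqrt{h(I_u)}dB_u$. Independence of $W$ and $B$ follows from the fact that $W_N$ is measurable with respect to $(U_j)$ while the conditional law of $\tilde M^N$ given $(U_j)$ does not depend on $(U_j)$; integration by parts, valid since $\varphi\in\mathcal C^1_b$, propagates convergence from $\tilde M^N$ to $L^N$ by continuous mapping.

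The main obstacle is step (c): passing from weak convergence of the drivers to tightness and identification of $K^N$. An a priori bound $\sup_{s\le T}\bE(K^N_s)^2 \le C_T$ is obtained by iterating the displayed Volterra equation and invoking Gronwall, and the same bound combined with Lipschitzness of $h'$ shows $R^N \to 0$ in probability. Tightness in the Skorohod topology follows from Aldous-type moduli for $L^N$ together with uniform boundedness of $\varphi$ and $h'$. Any weak limit then solves \eqref{eq:SDE_K} by the joint convergence of the drivers, and the linear Volterra operator $K\mapsto (2p-1)\int_0^t\varphi(t-s)h'(I_s)K_s ds$ has a bounded resolvent on $C([0,T])$, so the equation has a unique pathwise solution for each realisation of $(W, B)$, pinning down the weak limit as $K$.
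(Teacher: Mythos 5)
Your route is genuinely different from the paper's and is, in outline, workable. The paper never introduces the compensated martingales $M^j$: it uses the time-change representation $Z^{N,i}_t=Y_i\big(\int_0^t h(I^N_s)\,ds\big)$ (Lemma~\ref{l:reform}), aggregates the excitatory and inhibitory nodes into two Poisson processes $Y^{\pm}$, couples these almost surely to Brownian motions (Lemma~\ref{l:poi}), and then proves all three parts by Gronwall/$L^2$ estimates (via the convolution bound of Lemma~\ref{l:conv}) comparing $(J^N,I^N)$, resp.\ $\sqrt N(J^N-J,\,I^N-I)$, with auxiliary processes that have the limit law exactly: $\bar Z^i=Y_i\big(\int_0^\cdot h(I_s)ds\big)$ in part 2 and $(\widetilde G^N,\widetilde K^N)$ driven by the coupled Brownian motion $B^N$ in part 3. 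You instead run the classical weak-convergence programme: compensator decomposition, Volterra equations for $I^N$ and for $K^N=\sqrt N(I^N-I)$, Rebolledo's martingale CLT for the noise, tightness, and identification through uniqueness of the (deterministic, resp.\ linear) Volterra equation; this is closer in spirit to \cite{Chevallier2017b,HeesenStannat2020}. What the paper's strong coupling buys is that the uniform $L^2$ and almost-sure statements come out directly and no separate tightness/identification step is needed in part 3; what your route buys is a standard toolbox that avoids the Skorohod embedding.

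Two genuine issues. First, part 2 of the theorem asserts a construction of $\bar Z^1,\dots,\bar Z^n$ for which the convergence is \emph{almost sure} in Skorohod distance; your thinning coupling only shows that the expected number of discrepancies tends to $0$, i.e.\ convergence in probability and a.s.\ convergence along a subsequence, as you acknowledge. Since your part (i) gives only $L^2$ control of order $N^{-1/2}$ (squared errors of order $N^{-1}$, not summable), Borel--Cantelli does not upgrade this to the full sequence. The paper gets the a.s.\ statement because its Poisson--Brownian coupling yields $\sup_{s\le t}|I^N_s-I_s|\to 0$ almost surely and the limits are built from the \emph{same} Poisson clocks $Y_i$; to close this in your framework you would need an a.s.\ version of part (i) (e.g.\ fourth-moment bounds plus Borel--Cantelli, or the paper's coupling). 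Second, in part (iii) your justification of the independence of $W$ and $B$ is wrong as stated: the conditional law of $\widetilde M^N$ given $(U_j)$ \emph{does} depend on $(U_j)$, since its bracket $\int_0^\cdot h(I^N_u)du$ involves the signs through $I^N$. What is true, and what you need, is that conditionally on $(U_j)$ the martingale CLT applies for a.e.\ realisation and produces the \emph{same} limit law (because $\langle\widetilde M^N\rangle\to\int_0^\cdot h(I_u)du$ with $I$ deterministic), which gives asymptotic independence of $W_N$ and $\widetilde M^N$; the argument should be phrased that way.

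Smaller repairs: the theorem claims convergence uniformly on compacts in $L^2$, so the Gronwall argument should be run on $\mathbb E\big[\sup_{s\le t}(\,\cdot\,)^2\big]$, which requires first converting the convolution into a running supremum of the underlying martingale (integration by parts in the kernel, exactly the paper's Lemma~\ref{l:conv}) before Doob's inequality; your Gronwall on $\sup_{s\le t}\mathbb E[(\,\cdot\,)^2]$ only gives pointwise bounds. Also, in the a priori bound for $K^N$, bound the Taylor remainder linearly (using boundedness of $h'$, or avoid Taylor altogether by using only the Lipschitz property of $h$) so that Gronwall closes at the second-moment level, and reserve the quadratic bound $|\sqrt N R_1h(I^N_s,I_s)|\le h'_{Lip}N^{-1/2}(K^N_s)^2$ for showing $R^N\to 0$ afterwards.
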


\begin{remark}
  The form of \eqref{eq:IE} tells us that $I$ follows a linear
  Volterra convolution equation \cite{berger1980volterra}. Turning into a
  differential equation, we write, using Fubini,
  \begin{align*}
    \frac{dI}{dt} & = \frac{d}{dt}(2p-1) \Big(\int_0^t \int_s^t \varphi'(r-s) h(I_s) dr ds + \int_0^t \varphi(0) h(I_s)ds\Big)
    \\ & 
         = (2p-1) \Big(\int_0^t \varphi'(t-s) h(I_s) ds + \varphi(0) h(I_t) \Big).
  \end{align*}
In particular, the special choice of $\varphi(s) = e^{-\lambda s}$ gives

  \begin{align*}
    \frac{dI}{dt} & ={\color{black} -\lambda (2p-1) \int_0^t \varphi(t-s) h(I_s) ds + (2p-1) h(I_t)  = -\lambda I_t + (2p-1)h(I_t),} 
  \end{align*}
i.e.\ $I$ follows some ordinary differential equation in this case.
\end{remark}

\noindent
While Theorem~\ref{T1} is concerned with convergence of the limit intensity of the multivariate, non-linear Hawkes process, we are also in the situation to study convergence of the average intensity of $Z^N$. The proof of the next corollary is found in Section~\ref{ss:proofCors}.

\begin{corollary}\label{cor1}
Let $Z^N = (Z^{N,1},...,Z^{N,N})$ be as in Theorem~\ref{T1}, and $\bar Z = (\bar Z^1, \bar Z^2,...)$ be as in Theorem~\ref{T1}.2. Then, 
\begin{align}\label{eq:cor11}
    \frac 1N \sum_{j=1}^N \Big(Z^{N,j} - \int_0^. h(I^N_s)ds\Big) \xrightarrow{N\to\infty} 0
    \intertext{and}\label{eq:cor12}
    \frac 1N \sum_{j=1}^N \Big(Z^{N,j} - \bar Z^j\Big) \xrightarrow{N\to\infty} 0
\end{align}
in probability, uniformly on compact intervals. Moreover, 
\begin{align}\label{eq:cor13}
    \frac{1}{\sqrt N} \sum_{j=1}^N \Big(Z^{N,j} - \int_0^. h(I_s^N)ds\Big) \xRightarrow{N\to\infty} \int_0^. \sqrt{h(I_s)}d\widetilde B_s
\end{align}
for some Brownian motion $\widetilde B$, and  (writing $h(I) := (h(I_t))_{t\geq 0}$)
\begin{align}\label{eq:cor14}
    \sqrt{N} \big( h(I^N) - h(I)\big) \xRightarrow{N\to\infty} h'(I) K.
\end{align}
Let $B, \widetilde B$ be correlated Brownian motions with $\mathbf E[B_t \widetilde B_t] = (2p-1)t, t\geq 0$. Use $B$ in the definition of $K$ in \eqref{eq:SDE_K}. Then, 
\begin{align}\label{eq:38}
    \frac{1}{\sqrt N} \sum_{j=1}^N \Big(Z^{N,j} - \int_0^. h(I_s)ds\Big) \xRightarrow{N\to\infty} \int_0^. h'(I_s)K_s ds + \int_0^. \sqrt{h(I_s)}d\widetilde B_s.
\end{align}
\end{corollary}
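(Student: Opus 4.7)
The plan is to exploit that under Assumption~\ref{ass1} all components of $Z^N$ share the common intensity $h(I^N_\cdot)$, so the compensated processes
$$M^{N,j}_t := Z^{N,j}_t - \int_0^t h(I^N_s)\,ds,\qquad j=1,\ldots,N,$$
are orthogonal square-integrable martingales (since $Z^{N,i}$ and $Z^{N,j}$ have no simultaneous jumps) with predictable variation $\langle M^{N,j}\rangle_t = \int_0^t h(I^N_s)\,ds\le \|h\|_\infty t$. This common structure drives all five claims.

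For \eqref{eq:cor11}, Doob's $L^2$-inequality and orthogonality give
$$\bE\Big[\sup_{t\le T}\Big|\tfrac{1}{N}\sum_{j=1}^N M^{N,j}_t\Big|^2\Big]\le \tfrac{4}{N^2}\sum_{j=1}^N \bE[\langle M^{N,j}\rangle_T]\le \tfrac{4\|h\|_\infty T}{N}\to 0.$$
For \eqref{eq:cor12}, realize the $\bar Z^j$'s independently of the $Z^{N,j}$'s; then the algebraic identity
$$\tfrac{1}{N}\sum_j(Z^{N,j}-\bar Z^j) = \tfrac{1}{N}\sum_j M^{N,j} + \int_0^\cdot (h(I^N_s)-h(I_s))\,ds - \tfrac{1}{N}\sum_j \bar M^j,$$
with $\bar M^j_t := \bar Z^j_t - \int_0^t h(I_s)\,ds$, reduces the claim to (i) the Doob estimate above, (ii) the analogous Doob estimate for the orthogonal $\bar M^j$'s, and (iii) Lipschitz continuity of $h$ combined with Theorem~\ref{T1}.1.

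For \eqref{eq:cor13}, I apply the functional martingale CLT (Thm.~7.1.4 of \cite{EthierKurtz86}) to $X^N := \tfrac{1}{\sqrt N}\sum_j M^{N,j}$: its jumps are bounded by $1/\sqrt N\to 0$, and $\langle X^N\rangle_t = \int_0^t h(I^N_s)\,ds\to \int_0^t h(I_s)\,ds$ in probability, uniformly on compacts, identifying the limit as $\int_0^\cdot \sqrt{h(I_s)}\,d\widetilde B_s$. For \eqref{eq:cor14}, write $h(I^N_t)-h(I_t)=h'(I_t)(I^N_t-I_t)+r^N_t$ with $|r^N_t|\le C(I^N_t-I_t)^2$ (from $h'$ Lipschitz); Theorem~\ref{T1}.3 makes $\sqrt N(I^N-I)$ tight in the uniform topology (the limit $K$ being continuous), so $\sqrt N r^N = o_P(1)$, and continuous mapping with the deterministic factor $h'(I_\cdot)$ concludes.

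The main obstacle is \eqref{eq:38}: the decomposition
$$\tfrac{1}{\sqrt N}\sum_j\Big(Z^{N,j}-\int_0^\cdot h(I_s)\,ds\Big) = \tfrac{1}{\sqrt N}\sum_j M^{N,j} + \int_0^\cdot \sqrt N (h(I^N_s)-h(I_s))\,ds$$
yields the two summands' limits separately via \eqref{eq:cor13} and \eqref{eq:cor14}, but the statement requires \emph{joint} convergence with the correlation $\bE[B_t\widetilde B_t]=(2p-1)t$. To secure this, I would run a multivariate functional martingale CLT on the pair $\bigl(\tfrac{1}{\sqrt N}\sum_j M^{N,j},\,\tfrac{1}{\sqrt N}\sum_j U_j M^{N,j}\bigr)$: each coordinate has predictable QV $\int_0^t h(I^N_s)\,ds$ (since $U_j^2=1$), converging to $\int_0^t h(I_s)\,ds$, while the cross predictable covariation is $\tfrac{1}{N}\sum_j U_j \int_0^t h(I^N_s)\,ds \to (2p-1)\int_0^t h(I_s)\,ds$. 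The second coordinate is precisely the martingale input that produces the $B$-driven noise in \eqref{eq:SDE_K} (as enters the proof of Theorem~\ref{T1}.3 through the stochastic part of $\sqrt N(I^N-I)$), so the identified cross covariation is exactly the correlation $\bE[B_t\widetilde B_t]=(2p-1)t$, and joint continuous mapping (with integration against $ds$) yields \eqref{eq:38}.
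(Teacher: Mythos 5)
Your arguments for \eqref{eq:cor11}--\eqref{eq:cor14} are correct, but they take a different technical route from the paper: you use orthogonality of the compensated counting martingales $M^{N,j}$ plus Doob's inequality and the functional martingale CLT, whereas the paper works through the time-change representation $\sum_j Z^{N,j}_t=Y\big(N\int_0^t h(I^N_s)ds\big)$ with $Y(t)=Y^+(pt)+Y^-((1-p)t)$ and the almost-sure Skorohod-embedded Brownian limits of Lemma~\ref{l:poi}. Both work for the first four claims (your martingale version is arguably more self-contained); the paper's coupling has the advantage that all limiting Brownian motions ($B'$, $\hat B$, hence $B^N$ from \eqref{eq:bm}) live on one explicit probability space, which is what it later exploits for \eqref{eq:38}. (Minor point: in \eqref{eq:cor12} you need not, and should not, re-realize the $\bar Z^j$ independently --- the corollary refers to the coupled construction of Theorem~\ref{T1}.2 --- but since all three terms in your decomposition converge in probability to deterministic limits, the coupling is immaterial and your estimate applies as is.)

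For \eqref{eq:38} your idea is the right one and identifies the same correlation as the paper: your cross predictable covariation $\frac1N\sum_j U_j\int_0^t h(I^N_s)ds\to(2p-1)\int_0^t h(I_s)ds$ is the martingale-bracket counterpart of the paper's computation $\mathbb E[\hat B_t B'_t]=(2p-1)t$ with $B'_t=B^+_{pt}+B^-_{(1-p)t}$ and $\hat B_t=B^+_{pt}-B^-_{(1-p)t}$. The gap is the last sentence: ``joint continuous mapping'' does not finish the job, because $\sqrt N(I^N-I)$ (and hence the second summand in \eqref{eq:38}) is not a continuous functional of your second-coordinate martingale; it solves a Volterra equation whose driver also contains $W_N\int_0^\cdot h(I^N_s)ds$ and the feedback term $h'$-times-itself. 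To upgrade joint convergence of the martingale pair to joint convergence of $\big(\tfrac1{\sqrt N}\sum_j M^{N,j},\,\sqrt N(I^N-I)\big)$ you must show that $\sqrt N(I^N-I)-\widetilde K^N\to0$ in probability, where $\widetilde K^N$ from \eqref{eq:tildeK} is driven (via \eqref{eq:bm}) by precisely your second coordinate and by $W_N$, and then do the Gronwall/convolution estimates --- this is exactly the content of the paper's proof of Theorem~\ref{T1}.3, and its proof of the corollary explicitly invokes that in-probability approximation before computing the covariance $\mathbb E\big[\hat B\big(\int_0^t h(I_s)ds\big)B'\big(\int_0^t h(I_s)ds\big)\big]=(2p-1)\int_0^t h(I_s)ds$. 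You gesture at this by saying the second coordinate ``enters the proof of Theorem~\ref{T1}.3'', but that step (together with carrying $W_N$ along in the joint limit, independent of the Gaussian pair) has to be made explicit; once it is, your bracket-based identification of the $(2p-1)$ correlation is a perfectly good substitute for the paper's explicit $B^\pm$ coupling.
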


{\color{black}
\begin{remark}[Correlation between $B$ and $\widetilde B$] Let us briefly discuss the correlated Brownian motions appearing in \eqref{eq:38}. Clearly, the left hand sides of \eqref{eq:cor13} and \eqref{eq:cor14} sum to the left hand side of \eqref{eq:38}. The limits  $\int_0^. \sqrt{h(I_s)}d\widetilde B_s$ and $K$ appearing on the right hand sides of \eqref{eq:cor13} and \eqref{eq:cor14} are weak limits of sums of compensated point processes. While in \eqref{eq:cor13}, we sum over all point processes in the system, $I^N$ in \eqref{eq:cor14} distinguishes between nodes with different signs $U_j$.  Hence, the correlation is positive for the proportion $p$ of point processes with positive sign, and negative for the proportion $1-p$ of point processes with negative sign, summing to $p - (1-p) = 2p-1$. For more details, see the proof in Section~\ref{ss:proofCors}.
\end{remark}
}

\begin{remark}[Connections of Theorem~\ref{T1} to previous work]
\label{rem:T1} \mbox{}
\\
{\color{black} \textbf{Connections of Theorem 1.1 and Theorem 1.2 to \cite{Chevallier2017a, HeesenStannat2020}}. First note that 1. and 2. of Theorem~\ref{T1} are a particular case of \cite{Chevallier2017a}, Theorem 4.1. In their notation, we have choosen $H_{ij}=U_jh$ as interaction functions, which satisfy their condition (9), as stated in their remark 2.1 on the synaptic weights. We reformulate the result to introduce 3. of of Theorem~\ref{T1}, which does not appear in \cite{Chevallier2017a}, and to allow for a comparison to the result and proof of our Theorem~\ref{T2}.} 

Note further that Theorem~\ref{T1}.1 implies that the compensator $(\Lambda_t)_{t\geq 0}$ of each of the independent limiting processes $(\bar Z^i_t)_{t\geq 0}$ is
$$ \Lambda_t = \int_0^t h(I_s) ds= \int_0^t h\Big((2p-1) \int_0^s \varphi(s-r) h(I_r) dr\Big) ds= \int_0^t h\Big((2p-1) \int_0^s \varphi(s-r) d\Lambda_r\Big) ds.$$
See also (8) of\cite{delattre2016hawkes} and (5), (7), (8) of \cite{HeesenStannat2020} for the connection to \eqref{eq:IE}, both for the special case $p=1$ and $W=0$. In contrast to our setting, recall that \cite{HeesenStannat2020} require $\varphi(0)=0$. 
\\
\noindent
{\color{black} \textbf{Connections of Theorem 1.3 to \cite{Chevallier2017b}, \cite{HeesenStannat2020}}. Equation (5.10) from Theorem 5.6 in \cite{Chevallier2017b} gives an expression for the fluctuation of age dependent Hawkes processes. In the case where the transfer function becomes independent of the age, $\Psi(s,x)\equiv\Psi(x)$ in their notation, we discover our equation \eqref{eq:SDE_K} in the special case $p=1$ and $W=0$. As Corollary 2.2 of \cite{HeesenStannat2020} is a special case of this Theorem 5.6, (33) in that corollary is precisely \eqref{eq:SDE_K}, and the convergence of that Corollary for the special case $p=1$ (as well as $W=0$ and $\varphi(0)=0$) coincides with the statement of Theorem~\ref{T1}.3.}
\\
\noindent 
{\color{black} \textbf{Connections of Corollary~\ref{cor1} to \cite{HeesenStannat2020}}. The convergence in} \eqref{eq:cor14} generalizes Theorem~2.4 of \cite{HeesenStannat2020} to the case $p\neq 1$, $W\neq 0$ and $\varphi(0) \neq 0$.
{\color{black}Moreover, the convergence in \eqref{eq:38} generalizes Theorem~2.1 of \cite{HeesenStannat2020}: the right hand side of \eqref{eq:38} for $p=1$, $W=0$ has $B = \widetilde B$ and hence equals $G$ from \eqref{eq:SDE_K}. Using integration by parts and $\varphi(0)=0$ we obtain}
\begin{align*}
    0 & = \varphi(0) G_t - \varphi(t) G_0 = \int_0^t \varphi(t-s) dG_s - \int_0^t G_s \varphi'(t-s) ds.
\end{align*}
Therefore we can reshape $G$ from \eqref{eq:SDE_K},
\begin{align*}
    G_t & = \int_0^t h'(I_u) \int_0^u \varphi(u-s) dG_s du + \int_0^t \sqrt{h(I_s)} dB_s
    \\ & = \int_0^t h'(I_u)\int_0^u \varphi'(u-s)G_s dsdu + \int_0^t \sqrt{h(I_s)} dB_s
    \\ & = \int_0^t G_s \int_s^t \varphi'(u-s)h'(I_u) duds + \int_0^t \sqrt{h(I_s)} dB_s,
\end{align*}
to obtain their (24).
\end{remark}

\noindent
We now turn to the case $p = \tfrac 12$, where we can rescale the intensity by $\sqrt N$ rather than $N$. This is more involved since the limit of $I^N$ turns out to be a stochastic process. For the proof of the following Theorem, see Section~\ref{ss:proofT2}.

\begin{theorem}[Mean-field limit of multi-variate non-linear Hawkes
  processes, $p = \tfrac 12$]
  \label{T2}
  Let Assumption~\ref{ass1} hold with $p=\tfrac12$ and
  $\theta_N = \tfrac{1}{\sqrt{N}}$. Let
  $Z^N = (Z^{N,1}, ..., Z^{N,N})$ be the multivariate, non-linear
  Hawkes process fom Definition~\ref{def:hawkes}, and
  \begin{align}\label{eq:IN2}
    I^N(t) := \frac 1{\sqrt{N}} \sum_{j=1}^N\int_0^{t-} U_j \varphi(t-s) dZ_s^{j,N}.
  \end{align}
  \begin{enumerate}
  \item For all $w\in\mathbb R$, and a Brownian motion $B$, the
    stochastic integral equation
    \begin{align}
      \label{eq:SDE}
      I_t = I_0 + w\int_0^t \varphi(t-s) h(I_s) ds + \int_0^t
      \varphi(t-s)\sqrt{h(I_s)} dB_s
    \end{align}
    has a unique strong solution.
  \item Let $W\sim N(0,1)$, $B$ be an independent Brownian motion and
    $I$ be the solution of \eqref{eq:SDE} with $w$ replaced by $W$.
    Then, for all $n=1,2,...$,
    $(I^N, Z^{N,1},...,Z^{N,n}) \xRightarrow{N\to\infty} (I, \bar
    Z^1,...,\bar Z^n)$, where $\bar Z^1, ..., \bar Z^n$ are
    conditionally independent given $I$ and $\bar Z^i$ is a simple
    point process with intensity at time $t$ given by $ h(I_t)$,
    $i=1,...,n$.
  \end{enumerate}
\end{theorem}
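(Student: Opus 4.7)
The plan for part~1 is a Banach fixed-point argument on $\mathcal{H}_T := \{X : [0,T]\to\mathbb{R} \text{ adapted cadlag, } \sup_{t\leq T} \mathbb{E}[X_t^2] < \infty\}$. Define $\Phi(X)_t$ to be the right-hand side of \eqref{eq:SDE} with $X$ in place of $I$. Using Cauchy--Schwarz on the drift, It\^o isometry on the stochastic integral, boundedness of $\varphi$, and the Lipschitz properties of $h$ and $\sqrt{h}$ from Assumption~\ref{ass1}, one obtains
\begin{align*}
\mathbb{E}[(\Phi(X)_t - \Phi(Y)_t)^2] \leq C \int_0^t \mathbb{E}[(X_s - Y_s)^2]\,ds
\end{align*}
for a constant $C = C(w, \|\varphi\|_\infty, h_{Lip}, T)$; Gronwall then yields uniqueness and Picard iteration (with $L^2$-moments controlled via boundedness of $h$) gives existence on any $[0,T]$.

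For part~2, split $dZ_s^{j,N} = dM_s^{j,N} + h(I^N_s)\,ds$ with $M^{j,N}$ the compensated point process of $Z^{N,j}$ (since the complete graph of Assumption~\ref{ass1} includes self-loops, all $Z^{N,j}$ share the common intensity $h(I^N_{\cdot-})$), and rewrite
\begin{align*}
I^N_t = W_N \int_0^t \varphi(t-s) h(I^N_s)\,ds + \int_0^t \varphi(t-s)\,dQ^N_s,\qquad Q^N_t := \frac{1}{\sqrt{N}}\sum_{j=1}^N U_j M^{j,N}_t,
\end{align*}
where $W_N := N^{-1/2}\sum_j U_j \Rightarrow W\sim N(0,1)$ by the classical CLT. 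The process $Q^N$ is a true martingale with $\langle Q^N\rangle_t = \int_0^t h(I^N_s)\,ds \leq \|h\|_\infty t$ (using $U_j^2 = 1$ and that distinct $M^{j,N}$ share no jumps) and jumps of size $O(1/\sqrt{N})$. Tightness of $Q^N$ in $\mathcal{D}([0,\infty))$ follows from BDG and Aldous' criterion, and via the integration-by-parts rewrite $\int_0^t \varphi(t-s)\,dQ^N_s = \varphi(0)Q^N_t + \int_0^t \varphi'(t-s)Q^N_s\,ds$, together with $\varphi \in \mathcal{C}_b^1$ and boundedness of $h$, I obtain tightness of $I^N$.

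To identify a subsequential limit $(W, Q, I)$ of $(W_N, Q^N, I^N)$: the martingale central limit theorem (e.g.\ Thm.~7.1.4 in \cite{EthierKurtz86}), combined with $\langle Q^N\rangle \to \int_0^\cdot h(I_s)\,ds$, shows $Q$ is a continuous martingale with this quadratic variation, so $Q_t = \int_0^t \sqrt{h(I_s)}\,dB_s$ on an extended probability space for a Brownian motion $B$; the continuous mapping theorem, applied via the integration-by-parts rewrite, then identifies $I$ as a solution of \eqref{eq:SDE}. \emph{The main obstacle is showing that $W$ and $B$ are independent in the limit.} One has $\mathbb{E}[W_N Q^N_t] = 0$ for all $N, t$ (as each $M^{j,N}$ is a martingale starting at $0$ and $W_N$ is $\mathcal{F}_0^N$-measurable), but zero covariance does not automatically yield independence of the limits. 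A clean route is to decompose $Q^N = V^{N,+} - V^{N,-}$ into partial sums over $\{j: U_j = +1\}$ and $\{j: U_j = -1\}$, each of which is a martingale with QV $\tfrac{N_\pm}{N}\int_0^t h(I^N_s)\,ds \to \tfrac12 \int_0^t h(I_s)\,ds$ and with $[V^{N,+}, V^{N,-}] \equiv 0$; the joint martingale CLT then produces $(V^+, V^-)$ driven by two independent Brownian motions, while $W$ comes from the scaled imbalance $N_+ - N_-$, independent of these by classical Gaussian orthogonality.

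With $(W, B)$ independent, part~1 applied to $I$ yields uniqueness, and subsequential convergence promotes to the full convergence $I^N \Rightarrow I$. For the joint convergence with $(Z^{N,1}, \ldots, Z^{N,n})$: these share the common compensator $\int_0^\cdot h(I^N_s)\,ds$, and the influence of any individual $Z^{N,j}$ on $I^N$ is $O(1/\sqrt{N})$, hence vanishes. A coupling to $n$ inhomogeneous Poisson processes driven by independent Poisson random measures with intensity $h(I^N_{s-})$ (a standard mean-field device, cf.\ \cite{delattre2016hawkes, Chevallier2017a}) shows that $(Z^{N,1}, \ldots, Z^{N,n}) \Rightarrow (\bar Z^1, \ldots, \bar Z^n)$ jointly with $I^N \Rightarrow I$, with $\bar Z^1, \ldots, \bar Z^n$ conditionally independent given $I$ and of common intensity $h(I_t)$.
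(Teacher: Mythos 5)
Your overall architecture (semimartingale decomposition of $I^N$ into the drift $W_N\int_0^t\varphi(t-s)h(I^N_s)ds$ plus a convolution against the rescaled compensated martingale $Q^N$, martingale CLT, then Volterra uniqueness from part~1 to pin down the limit) is a legitimate alternative to the paper's route, which instead uses the time-change representation $Z^{N,i}_t=Y_i(\int_0^t h(I^N_s)ds)$, an almost-sure Brownian coupling of the rescaled Poisson processes (Lemma~\ref{l:poi}), Gronwall estimates \emph{conditionally on the signs} $U=(U_j)$, and a generator/martingale-problem argument for the joint limit. However, the step you yourself flag as the main obstacle is exactly where your argument has a genuine gap: the independence of $W$ and the limiting Brownian motion $B$ is not proved by your $V^{N,+}/V^{N,-}$ decomposition. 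The zero covariation $[V^{N,+},V^{N,-}]\equiv 0$ only addresses independence of the two limiting martingales from each other, not of either from $W$; and ``classical Gaussian orthogonality'' does not apply, because the limits of $V^{N,\pm}$ are not Gaussian and are not jointly Gaussian with $W$ --- their quadratic variations $\tfrac{N_\pm}{N}\int_0^\cdot h(I^N_s)ds$ are random and depend on $W_N$ through $I^N$, so there is genuine dependence at every finite $N$ and uncorrelatedness in the limit yields nothing. Without independence of $(W,B)$ you cannot invoke part~1 (stated for deterministic $w$) to identify the law of the subsequential limit, so the conclusion $I^N\Rightarrow I$ is not reached.

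Two standard repairs exist. One is the paper's: work conditionally on $U$ (equivalently, on an enlarged space where $W_N\to W$ almost surely), prove $\mathbb E_U[\sup_{s\le t}(I^N_s-I_s)^2]\to 0$ by Gronwall with the frozen value $W$, and observe that the driving noise in the limit comes only from the Poisson randomness, so its conditional law given $U$ does not depend on $W$; independence is then automatic. The other stays within your martingale framework: show that the limit $Q$ is a martingale with respect to a filtration whose time-zero $\sigma$-field contains $\sigma(W)$ (pass the martingale property to the limit using test functions of $W$, which is justified by the bounded jumps and bounded quadratic variation of $Q^N$); then the Brownian motion furnished by the martingale representation theorem is a Brownian motion for that filtration and hence independent of $\sigma(W)$. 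Either fix should be written out; as it stands the key identification step fails. A secondary, smaller issue: your final coupling paragraph for the conditional independence of $(\bar Z^1,\dots,\bar Z^n)$ given $I$ is only a sketch --- the paper handles precisely this point by the generator computation with $J^{N(n)}=\theta_N\sum_{j>n}U_jZ^{N,j}$, quantifying that removing $n$ coordinates changes nothing in the limit --- so you would need to make the ``influence is $O(1/\sqrt N)$'' statement quantitative in the same spirit.
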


\noindent
Again, we discuss convergence of the average intensity of $Z^N$. The proof is in Section~\ref{ss:proofCors}.

\begin{corollary}\label{cor2}
Let $Z^N = (Z^{N,1},...,Z^{N,N})$ be as in Theorem~\ref{T2}, and $(I, \bar Z^1, \bar Z^2,...)$ be as in Theorem~\ref{T2}.2. Then, \eqref{eq:cor11}, \eqref{eq:cor12}, \eqref{eq:cor13} hold.
\end{corollary}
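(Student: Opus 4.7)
The proof closely follows the pattern of Corollary~\ref{cor1}, exploiting two features inherited from the mean-field setup: (a) on the complete graph, every coordinate has the common intensity $\lambda^{N,j}_t = h(I^N_t)$, so that $M^{N,j}_t := Z^{N,j}_t - \int_0^t h(I^N_s)\,ds$ is a martingale for each $j$; (b) by Definition~\ref{def:hawkes} distinct $Z^{N,j}$ never jump simultaneously, hence the $M^{N,j}$ are pairwise orthogonal with $\langle M^{N,j}\rangle_t = \int_0^t h(I^N_s)\,ds \le \|h\|_\infty\, t$. Applying Doob's $L^2$ inequality to $\tfrac{1}{N}\sum_{j=1}^N M^{N,j}$, whose angle bracket is bounded by $\|h\|_\infty\, t/N$, immediately yields \eqref{eq:cor11} with mean-square rate $O(1/N)$.

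For \eqref{eq:cor12} I mirror this argument on the limit side: by Theorem~\ref{T2}.2 the $\bar Z^j$ are conditionally independent given $I$ with common intensity $h(I_\cdot)$, so $\bar M^j_t := \bar Z^j_t - \int_0^t h(I_s)\,ds$ are pairwise orthogonal martingales (in a filtration containing $I$) with bracket bounded by $\|h\|_\infty\, t$, and the same Doob estimate gives $\tfrac{1}{N}\sum_j \bar M^j \to 0$ uniformly on compacts in probability. Decomposing
\[
\tfrac{1}{N}\sum_{j=1}^N (Z^{N,j} - \bar Z^j) \;=\; \tfrac{1}{N}\sum_j M^{N,j} \;-\; \tfrac{1}{N}\sum_j \bar M^j \;+\; \int_0^\cdot\bigl(h(I^N_s) - h(I_s)\bigr)\,ds,
\]
the first two summands vanish in probability by the above. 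For the third, $I^N\Rightarrow I$ from Theorem~\ref{T2}.2 together with the Lipschitz continuity of $h$ makes $x\mapsto \int_0^\cdot h(x_s)\,ds$ a continuous map from $\mathcal D_{\mathbb R}([0,\infty))$ into $\mathcal C([0,\infty))$; Skorohod representation then places all objects on a common probability space on which the integral vanishes almost surely, yielding \eqref{eq:cor12}.

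For the CLT \eqref{eq:cor13} I set $Y^N_t := \tfrac{1}{\sqrt N}\sum_{j=1}^N M^{N,j}_t$ and note $\langle Y^N\rangle_t = \int_0^t h(I^N_s)\,ds$ while the maximal jump of $Y^N$ is $1/\sqrt N \to 0$, so that the Lindeberg (small-jump) condition is automatic. The continuous mapping theorem applied to Theorem~\ref{T2}.2 gives $(I^N, \langle Y^N\rangle_\cdot) \Rightarrow (I, \int_0^\cdot h(I_s)\,ds)$, and Rebolledo's martingale functional CLT (e.g.\ Theorem~VIII.3.11 of Jacod--Shiryaev, or Theorem~7.1.4(b) in \cite{EthierKurtz86}) yields $(I^N, Y^N) \Rightarrow (I, Y)$, where $Y$ is a continuous martingale of bracket $\int_0^\cdot h(I_s)\,ds$. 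Such a $Y$ admits a representation $Y_t = \int_0^t \sqrt{h(I_s)}\,d\widetilde B_s$ for some Brownian motion $\widetilde B$ on a possibly enlarged space, giving \eqref{eq:cor13}.

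The principal obstacle is the stable/joint character of the martingale CLT in the last step: because $I$ is itself random, the limiting bracket is random, and one must invoke a version of the CLT producing the Gaussian martingale on the same probability space as $I$ (equivalently, supporting the representation of $Y$ as a stochastic integral driven by a Brownian motion $\widetilde B$ defined jointly with $I$). The only other technical wrinkle is the Skorohod coupling in the proof of \eqref{eq:cor12}, needed to convert the distributional convergence $I^N \Rightarrow I$ into an almost-sure statement compatible with the $o_\bP(1)$ martingale terms; this is routine.
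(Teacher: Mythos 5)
Your route is genuinely different from the paper's. The paper proves Corollary~\ref{cor2} by noting that the proof of Corollary~\ref{cor1} carries over verbatim: it writes $\sum_{j}Z^{N,j}_t = Y\big(N\int_0^t h(I^N_s)\,ds\big)$ for a single unit-rate Poisson process $Y$ coming from the time-change representation of Section~\ref{ss:reformulation}, and then uses the strong (almost sure and $L^2$) Poisson-to-Brownian coupling of Lemma~\ref{l:poi} together with the uniform in-probability convergence $\sup_{s\leq t}|I^N_s-I_s|\to 0$ established in Steps~1--2 of the proof of Theorem~\ref{T2}; the limit in \eqref{eq:cor13} is then literally $B'\big(\int_0^\cdot h(I_s)\,ds\big)$, rewritten as $\int_0^\cdot\sqrt{h(I_s)}\,d\widetilde B_s$ by the time-change theorem. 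Your orthogonal-martingale/Doob argument for \eqref{eq:cor11}, and for the term $\tfrac1N\sum_j\bar M^j$ in \eqref{eq:cor12}, is correct and in fact makes the conditional law of large numbers given $I$ more explicit than the paper does.

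Two points, however, are genuine gaps. First, in \eqref{eq:cor12} the term $\int_0^\cdot\big(h(I^N_s)-h(I_s)\big)\,ds$ involves $I^N$ and $I$ on the same probability space (the coupling implicit in ``as in Theorem~\ref{T2}.2''), and weak convergence $I^N\Rightarrow I$ plus a Skorohod re-coupling does not control it: re-coupling changes the joint law of $(I^N,I)$ and the conclusion does not transfer back (an independent copy of $I$ satisfies $I^N\Rightarrow I$ but not $I^N-I\to 0$). What you need, and what is available, is the in-probability uniform convergence $I^N\to I$ from the proof of Theorem~\ref{T2}; cite that instead of Skorohod representation. Second, and more seriously, the last step of your proof of \eqref{eq:cor13} does not follow from the results you quote: Rebolledo's theorem, Jacod--Shiryaev VIII.3.11 and the Ethier--Kurtz martingale CLT produce a Gaussian martingale, i.e.\ they require the limiting bracket to be deterministic, whereas here the bracket $\int_0^\cdot h(I_s)\,ds$ is random and the limit law is a mixture (conditionally Gaussian given $I$). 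To obtain the claimed limit one needs joint (or stable) convergence together with $I$, for instance a stable martingale CLT with nesting conditions, or a martingale-problem identification as in Step~3 of the proof of Theorem~\ref{T2}; you name this obstacle yourself but leave it unresolved, so the decisive step of \eqref{eq:cor13} is missing as written. The paper's device --- funnelling $\sum_j Z^{N,j}$ through a single Poisson process and invoking the pathwise coupling of Lemma~\ref{l:poi}, then composing the resulting Brownian motion with the random time $\int_0^\cdot h(I_s)\,ds$ --- avoids the random-bracket CLT altogether, which is exactly why its Corollary~\ref{cor1} argument extends verbatim to the critical case.
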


\noindent 
We stress that the results corresponding to \eqref{eq:cor14} and \eqref{eq:38} do not hold here since $I^N \Rightarrow I$, so no further upscaling is possible. 

{\color{black}
\begin{remark}[Some connections to \cite{erny2020mean}]
\label{rem:T2a}
Our model and the statements of Theorem~\ref{T2}.2 should be compared to the findings in \cite{erny2020mean}.
In their model, the size of interactions is given by a centered probability measure, so at each spike of a neuron, the potential given to other neurons in the system may change. In contrast, in our model, the potential given to other neurons is fixed for each neuron, but varies from neuron to neuron. The latter case seems more relevant in neuroscience; see the discussion in Section~\ref{S:discuss}. Still, results from Theorem~\ref{T2} can be compare to Theorem 1.4(ii) and Theorem 1.7. of \cite{erny2020mean}, where the same type of convergence is shown, but the interaction kernel $\varphi$ is an exponential function. As we see in Remark~\ref{rem:T2}.2 below, this special choice makes the intensity $I$ a Markov process. However, as the intensity of both models is still comparable, we find (3) from \cite{erny2020mean} in our Remark~\ref{rem:T2}.2 below by inserting $w=0$.
\end{remark}
}

\begin{remark}[Some properties of $I$]\label{rem:T2}
  \begin{enumerate}
  \item If $\varphi'$ exists, it is not hard to see that $I_t$ is a
    semimartingale, since
    \begin{align*}
      \int_0^t \varphi(t-u)\sqrt{h(I_u)} dB_u & =    
                                                \int_0^t \int_0^s \varphi'(t-s)\sqrt{h(I_u)} dB_u ds +
                                                \int_0^t \varphi(0)\sqrt{h(I_u)} dB_u
    \end{align*}
    by the stochastic Fubini Theorem \cite[Theorem 4.A]{berger1980volterra}, where the first term on the
    right hand side has finite variation and the second is a (local)
    martingale. This calculation also shows that $I$ has finite
    variation provided that $\varphi(0)=0$.
  \item Consider the special case $\varphi(s) = e^{-\lambda s}$. Then,
    a solution of \eqref{eq:SDE} also solves
    $$ dI = (wh(I) - \lambda I) dt + \sqrt{h(I)} dB$$
    and in particular is a Markov process.
    In order to see this, we write
    \begin{align*}
      I_t - I_0 = e^{-\lambda t} \Big(w \int_0^t e^{\lambda s} h(I_s) ds + \int_0^t e^{\lambda s} \sqrt{h(I_s)} dB_s\Big).
    \end{align*}
    Using integration by parts (on the processes
    $A_t := e^{-\lambda t}$, which satisfies $dA = -\lambda A dt$, and
    the term $C_t$ in brackets), we find that
    \begin{align*}
      I_t - I_0 & = A_tC_t = \int_0^t A_s dC_s + \int_0^t C_s dA_s
      \\ & = w\int_0^t h(I_s) ds + \int_0^t \sqrt{h(I_s)} dB_s  - \lambda \int_0^t A_s C_s ds
      \\ & = \int_0^t (wh(I_s) - \lambda I_s) ds + \int_0^t \sqrt{h(I_s)} dB_s. 
    \end{align*}
  \end{enumerate}
\end{remark}

\section{Implications for neuroscience}
\label{S:discuss}
 The human brain is very large. It comprises 86 billion neurons, along with other types of cells \cite{Herculano-2009-FrontHumNeurosci}. Each individual neuron makes contact with hundreds or thousands of other neurons, forming a very complex network. It should be thought of as a highly integrated circuit, made of biological parts, which are capable of processing electro-chemical signals and transmitting them to distant neurons with high fidelity. Neurons use electrical impulses (``action potentials'' or ``spikes'') to communicate with other neurons in the same network. Sequences of these impulses evolve in time (``spike trains''), involving the collective activity of many interacting neurons. These multivariate signals are believed to reflect the dynamic ``computations'' performed by these networks. The biological ``purpose'' of these processes covers a wide range from immediate control of bodily functions, like activating a muscle, to abstract information processing, like making a complex decision \cite{Kandel-2012-Book}.

Neurons communicate with other neurons in a network by activating specialized functional contacts between cells (``synapses''). In the most common type of synapses, diffusing chemicals are used for signaling (``neurotransmitters''). In a nutshell, each spike of the sender neuron triggers a transient change in excitability of the receiver neuron. This can either increase or decrease the tendency of the receiver neuron to fire an action potential. We speak of ``excitation'' and ``inhibition'', respectively. A hallmark of normal brain function is a robust balance of excitation and inhibition. Brain diseases, in contrast, are often linked with a marked imbalance of these two forces \cite{Yizhar-2011-Nature}. 

A whole zoo of mathematical models exist to mimic the dynamical processes of synaptic interaction in networks of spiking neurons. Most of them are difficult to analyze, due to the discrete nature of pulse trains and an essential nonlinearity in the spike generation mechanism. From a mathematical point of view, the Hawkes process seems like a very reasonable choice as it can directly reflect several important biological features. First, as a stochastic point process it is by construction based on pulse-coded signaling. Second, the use of interaction kernels automatically reflects the causal nature of communication in a physical system with recurrent loops, allowing only spikes in the past to influence the future evolution of the network. Third, as a multivariate process with coupled components it is able to capture and consistently describe the collective activity dynamics.

The linear Hawkes process \cite{Hawkes-1971-Biometrika} has been employed as a mathematical model of networks of spiking neurons, as it can directly reflect excitatory synaptic interactions between spiking neurons using positive interaction kernels \cite{Jovanovic-2015-PhysRevE}. An important shortcoming of the linear model is that inhibitory synaptic interaction cannot be included in a consistent way. Negative interaction kernels are not allowed, as they might yield a negative point process intensity. To enable the study of networks with excitation-inhibition balance, however, negative interaction kernels must be admitted \cite{Pernice-2011-PLOSCB}. A nonlinear link function can fix this deficit, and effective descriptions of nonlinear biological neurons combining linear synaptic interaction with a nonlinear link function are actually well-known in computational neuroscience. They have been called cascade models \cite{Herz-2006-Science}, and from the viewpoint of statistics they lead to generalized linear models \cite{Paninski-2004-NeuralComput}. For that reason, the nonlinear Hawkes process is a natural extension of Hawkes' classical model, greatly extending its range of applications.

The specific model described in this paper can be further generalized, allowing for a different interaction kernel for each pair of nodes in the network. This includes networks with arbitrary graph topology, setting kernels to zero where a synapse is absent \cite{Pernice-2011-PLOSCB}. In brain networks, the amplitude of individual interaction kernels (``synaptic weight'') is thought to change as a result of learning. The promise for the biologist is to be able to investigate networks, the structure of which is either designed for a specific purpose, or reflects previous experience and memory. The challenge for the mathematician is to manage the lack of symmetry in these networks, and a genuinely multivariate description becomes necessary.

There is an ongoing debate about the significance of scaling synaptic weights for networks of infinite size \cite{Vreeswijk-1998-NeuralComput, Barral-2016-NatureNeurosci}. As the number of neurons in brain networks is typically quite high, such limit may represent a meaningful approximation to large brains. The difficulty is that large brains also have to accommodate constraints with regard to the number of possible connections, imposed by the distance of neurons in space and by the volume occupied by fibers. So it might be more fruitful to rephrase the question a bit \cite{pmid34464597}: Which aspect of the input to a neuron from within the network is more important: Is it the mean drift, or is it the amplitude of the fluctuations? If there is a perfect balance between excitation and inhibition, the drift is zero, and the fluctuations take over and drive the neuron in a stochastic fashion. For the slightest imbalance of the inputs, however, keeping the drift bounded in very large networks forces the fluctuations to be very small, and the dynamics becomes more deterministic. This is where biophysical intuition is in line with the result of mathematical analysis presented in this paper.

\section{Proofs}
We start off in Subsection~\ref{ss:prelim} with some preliminary results on convergence of Poisson processes and convolution equations. We proceed in~\ref{ss:reformulation} with a reformulation of the multivariate linear Hawkes process using a time-change equation. Then, we prove Theorem~\ref{T2} in Sebsection~\ref{ss:proofT2} and Theorem~\ref{T1} in Subsection~\ref{ss:proofT1}. We give the proof of Theorem~\ref{T2} first because it is more involved than the proof of Theorem~\ref{T1}, mainly since the limit process $I$ is stochastic. 

\subsection{Preliminairies}
\label{ss:prelim}\noindent
We need the following (rather standard) convergence results for Poisson processes.

\begin{lemma}[Convergence of Poisson processes]\label{l:poi}
  Let $Y$ be a unit rate Poisson process, defined on some probability
  space $(\Omega, \mathcal F, \mathbb P)$. Then, for all $t>0$,
  \begin{align}\label{eq:lpoi1}
    \sup_{0\leq s\leq t} \frac{1}{N}(Y(Ns)-Ns) \xrightarrow{N\to\infty}0
  \end{align}
  almost surely (and in $L^2$). In addition,
  \begin{align}\label{eq:1b}
    \Big(\frac{1}{\sqrt{N}}(Y(Nt)-Nt)\Big)_{t\geq 0} \xRightarrow{N\to\infty} B
  \end{align}
  for some Brownian motion $B$. We can extend
  $(\Omega, \mathcal F, \mathbb P)$ by a Brownian motion $B$ such that, for all $t\geq 0$,
  \begin{align}\label{eq:1a}
    \sup_{0\leq s\leq t} \Big|\frac{1}{\sqrt{N}}(Y(Ns)-Ns) - B_s\Big| \xrightarrow{N\to\infty} 0
  \end{align}
  almost surely and in $L^2$.
\end{lemma}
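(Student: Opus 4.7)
The plan is to treat the three assertions as increasingly refined versions of classical limit theorems for the compensated Poisson process $M_t := Y(t)-t$, which is a martingale with $\langle M\rangle_t = t$. I would begin by recording that $M$ is a pure-jump martingale whose jumps are of size $1$, and set $X^N_t := \tfrac{1}{\sqrt N} M(Nt)$, the process that appears in \eqref{eq:1b} and \eqref{eq:1a}.

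For \eqref{eq:lpoi1}, the first step is the classical SLLN: $Y(n)/n \to 1$ a.s.\ (a sum of $n$ i.i.d.\ Poisson(1) variables), which by monotonicity of $Y$ in between integers, together with a sandwich argument, upgrades to $\sup_{s\leq t}|Y(Ns)/N - s| \to 0$ a.s. For the $L^2$ part I would invoke Doob's maximal inequality applied to the square-integrable martingale $M$, giving
\begin{align*}
 \mathbb E\Big[\sup_{0\leq s\leq t}(Y(Ns)-Ns)^2\Big] \leq 4\,\mathbb E[(Y(Nt)-Nt)^2] = 4Nt,
\end{align*}
so dividing by $N^2$ yields the $L^2$ statement.

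For \eqref{eq:1b}, the natural tool is the martingale functional CLT (e.g.\ Theorem~7.1.4 in \cite{EthierKurtz86}): the jumps of $X^N$ are of size $1/\sqrt N \to 0$, and its predictable quadratic variation $\langle X^N\rangle_t = t$ is already deterministic and equals that of Brownian motion, so both hypotheses of the martingale CLT hold and $X^N \Rightarrow B$ in $\mathcal D_{\mathbb R}([0,\infty))$. Alternatively, one can invoke Donsker's theorem for the i.i.d.\ unit-interval increments of $Y$, plus interpolation control using the maximal inequality from the first step.

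Finally, for \eqref{eq:1a} I would apply the Skorohod representation theorem to the weak convergence $X^N \Rightarrow B$ from \eqref{eq:1b}: on an enlarged probability space we obtain a Brownian motion $B$ and copies $\tilde X^N \stackrel{d}{=} X^N$ with $\tilde X^N \to B$ almost surely in the Skorohod topology. Because the limit $B$ is continuous, Skorohod convergence coincides with locally uniform convergence, which gives the almost-sure statement in \eqref{eq:1a}. For the $L^2$ upgrade the main obstacle is uniform integrability of $\sup_{s\leq t}|\tilde X^N_s - B_s|^2$; this is handled by bounding the supremum by $2\sup_{s\leq t}(\tilde X^N_s)^2 + 2\sup_{s\leq t} B_s^2$ and using Doob's maximal inequality again to get $\mathbb E[\sup_{s\leq t}(X^N_s)^2] \leq 4t$ uniformly in $N$, so that the family is $L^2$-bounded and hence u.i., and a.s.\ convergence upgrades to $L^2$ convergence. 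The subtlest point in the whole lemma is really just this last coupling/UI step; everything else reduces to standard martingale estimates.
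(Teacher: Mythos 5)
Most of your argument is sound and runs largely parallel to the paper's: for \eqref{eq:lpoi1} your SLLN-plus-monotonicity argument for the almost-sure part and the Doob $L^2$ bound $\mathbb E[\sup_{0\le s\le t}(\tfrac1N(Y(Ns)-Ns))^2]\le 4t/N$ for the $L^2$ part are a valid alternative to the paper's fourth-moment/Borel--Cantelli computation; the martingale FCLT in place of Donsker's theorem for \eqref{eq:1b} is fine; and Skorohod representation plus continuity of $B$ for the almost-sure statement in \eqref{eq:1a} is exactly the paper's route.

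The gap is in the very step you yourself flag as the subtlest one, the $L^2$ upgrade in \eqref{eq:1a}. To pass from $\sup_{s\le t}|\tilde X^N_s-B_s|\to 0$ a.s.\ to convergence in $L^2$ you need uniform integrability of the squares $V_N:=\sup_{s\le t}|\tilde X^N_s-B_s|^2$. Your justification dominates $V_N$ by $2\sup_{s\le t}(\tilde X^N_s)^2+2\sup_{s\le t}B_s^2$ and invokes $\mathbb E[\sup_{s\le t}(X^N_s)^2]\le 4t$; but this only says that the dominating family is bounded in $L^1$, and $L^1$-boundedness does not imply uniform integrability (a domination argument needs the dominating family itself to be u.i., not merely to have bounded means). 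Equivalently, a uniform second-moment bound on $\sup_{s\le t}|\tilde X^N_s-B_s|$ yields u.i.\ of the first powers and hence only $L^1$ convergence of the supremum, not the asserted $L^2$ convergence. What is needed is a uniformly bounded moment of order strictly greater than $2$, and this is precisely how the paper closes the step: Doob's $L^4$ maximal inequality, Minkowski's inequality and the Poisson fourth central moment $3\lambda^2+\lambda$ give $\mathbb E\big[\sup_{0\le s\le t}\big(\tfrac{1}{\sqrt N}(Y(Ns)-Ns)-B_s\big)^4\big]^{1/4}\le \tfrac43\big((3t^2+t/N)^{1/4}+(3t^2)^{1/4}\big)$, which is bounded in $N$, so the family $V_N$ is u.i.\ and the $L^2$ convergence follows. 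Adding this fourth-moment estimate repairs your proof; everything else stands (and the same estimate would also give you the paper's Borel--Cantelli route to \eqref{eq:lpoi1}, though your Doob-$L^2$ bound already suffices there).
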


\begin{proof}
  For \eqref{eq:lpoi1}, we use Doob's martingale inequality for the
  martingale $(Y(s)-s)_{s\geq 0}$ {\color{black} and the fact that the fourth centered moment of a Poisson random variable with parameter $\lambda$ is $3\lambda^2 + \lambda$} in order to see
  \begin{align*}
    \mathbb E\Big[\sup_{0\leq s\leq t} \Big(\frac{1}{N}(Y(Ns)-Ns)\Big)^4\Big]
    \leq \Big(\frac 43\Big)^4 \mathbb E\Big[ \Big(\frac{1}{N}(Y(Nt)-Nt)\Big)^4 \Big]
    = {\color{black}\Big(\frac 4{3N}\Big)^4}\big(3(Nt)^2+Nt\big),
  \end{align*}
  which is summable. Using Borel-Cantelli, almost-sure convergence follows.
  Next, \eqref{eq:1b} follows by an application of Donsker's theorem
  (\cite{jacod2013limit} Ch VII, Cor. 3.11). By Skorohod's Theorem, we
  can extend our probability space such that this convergence is almost surely with respect to Skorohod distance, and by continuity of $B$ it is equivalent to uniform convergence, i.e. \eqref{eq:1a} holds
  almost surely for all $t\geq 0$. For the $L^2$-convergence in
  \eqref{eq:1a}, it suffices to show the uniform integrability of
  $\sup_{0\leq s\leq t} \Big(\frac{1}{\sqrt{N}}(Y(Ns)-Ns) -
  B_s\Big)^2$. Using Doob's and Minkovski's inequalities, we compute
  \begin{align*}
    \mathbb E\Big[\sup_{0\leq s\leq t} \Big(\frac{1}{\sqrt{N}}(Y(Ns)-Ns) -
    B_s\Big)^4\Big]^{1/4} & \leq \frac 43
                                \mathbb E\Big[\Big(\frac{1}{\sqrt{N}}(Y(Nt)-Nt) -
                                B_t\Big)^4\Big]^{1/4}
    \\ & \leq \frac 43 \Big(\mathbb E\Big[\Big(\frac{1}{\sqrt{N}}(Y(Nt)-Nt)\Big)^4\Big]^{1/4}
         + \mathbb [(B_t)^4]^{1/4}\Big)
    \\ & = \frac 43( (3t^2 + t/N)^{1/4} + (3t^2)^{1/4}) 
  \end{align*}
  showing the desired uniform integrability and $L^2$-convergence
  follows.
\end{proof}

\noindent
In order to bound the value of a convolution equation by its integrator we need the following 
\begin{lemma}\label{l:conv}
  Let $J$ be the sum of an It\^{o} process with bounded coefficients and a c\`{a}dl\`{a}g pure-jump process, and let $\varphi \in \mathcal C_b^1([0,\infty))$. Then
  \begin{align}
    \label{eq:conv}
    \sup_{0\leq s\leq t}\Big(\int_0^s \varphi(s-r)
    dJ_r\Big)^2\leq C(\varphi,t) \cdot \sup_{0\leq s\leq t} J_s^2.   
  \end{align}
\end{lemma}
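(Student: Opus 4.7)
The plan is to move the derivative off the rough process $J$ and onto the smooth function $\varphi$ via integration by parts, which turns the stochastic integral into a pathwise Riemann integral that can be bounded directly by $\sup J^2$.

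Concretely, for fixed $s>0$ the map $r \mapsto \varphi(s-r)$ is deterministic, continuous, and of finite variation on $[0,s]$ with derivative $-\varphi'(s-r)$, so its quadratic covariation with the semimartingale $J$ vanishes. The integration-by-parts formula then yields
\begin{equation*}
\int_0^s \varphi(s-r)\,dJ_r \;=\; \varphi(0)\,J_s \;-\; \varphi(s)\,J_0 \;+\; \int_0^s \varphi'(s-r)\,J_r\,dr.
\end{equation*}
Squaring, applying $(a+b+c)^2 \leq 3(a^2+b^2+c^2)$, and bounding the remaining integral by pulling the sup of $|J|$ out in the triangle inequality gives
\begin{equation*}
\Big(\int_0^s \varphi(s-r)\,dJ_r\Big)^2 \;\leq\; 3\varphi(0)^2 J_s^2 \;+\; 3\varphi(s)^2 J_0^2 \;+\; 3\,s^2\,\|\varphi'\|_\infty^2 \sup_{0\leq r\leq s} J_r^2.
\end{equation*}
Each of the three terms on the right is dominated by $\sup_{0\leq r\leq t} J_r^2$ times a constant depending only on $\varphi$ and $t$ (using $\varphi, \varphi'$ bounded and $s\leq t$). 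Taking $\sup_{0\leq s\leq t}$ on the left-hand side then yields the claim with, for example, $C(\varphi,t) = 3\bigl(\varphi(0)^2 + \|\varphi\|_\infty^2 + t^2\|\varphi'\|_\infty^2\bigr)$.

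The only real step that needs care is justifying the integration-by-parts identity above, which is also the reason for the structural hypothesis on $J$. Since $J$ is the sum of an It\^o process with bounded coefficients and a c\`adl\`ag pure-jump process, it is a semimartingale, and the product rule for semimartingales applies to $\varphi(s-\cdot)\cdot J$; the cross-variation term vanishes because $\varphi(s-\cdot)$ is continuous and of finite variation, and jumps of $J$ on a countable set contribute nothing to the $dr$-integral. Once this is established, the rest of the proof is purely deterministic and elementary, so no further obstacle arises.
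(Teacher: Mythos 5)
Your proof is correct and arrives at exactly the identity underlying the paper's argument, $\int_0^s\varphi(s-r)\,dJ_r=\varphi(0)J_s-\varphi(s)J_0+\int_0^s\varphi'(s-r)J_{r-}\,dr$; the only difference is the justification, since you use the semimartingale integration-by-parts formula (with vanishing covariation because $\varphi(s-\cdot)$ is continuous and of finite variation, and $J_{r-}=J_r$ for a.e.\ $r$), whereas the paper writes $\varphi(s-r)=\varphi(0)+\int_r^s\varphi'(s-u)\,du$ and interchanges integrals via the stochastic Fubini theorem of Berger--Mizel. The subsequent pathwise bounding is the same, yielding a constant depending only on $\|\varphi\|_\infty$, $\|\varphi'\|_\infty$ and $t$ (the paper takes $J_0=0$ without loss of generality, so your extra $\varphi(s)J_0$ term simply does not appear there).
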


\begin{proof}
Wlog, we have $J_0=0$. By \cite[Theorem 4.A]{berger1980volterra} and Fubini's theorem for Lebesgue-integrals we can apply the Stochastic Fubini Theorem to $J$, hence
\begin{align*}
    \sup_{0\leq s\leq t}\Big(\int_0^s \varphi(s-r)
    dJ_r\Big)^2 & \notag = \sup_{0\leq s\leq t}\Big(\int_0^s \Big( \varphi(0) + \int_r^s \varphi'(s-u) du \Big) dJ_r\Big)^2 \\
  &  \notag\leq 2 \, ||\varphi||^2 \cdot \sup_{0\leq s\leq t}
    J_{\color{black}s}^2  + 2\sup_{0\leq s\leq
    t}\Big(\int_0^s \int_0^{u} \varphi'(s-u) dJ_r
    du\Big)^2 \\ & \notag= 2 \, ||\varphi||^2 \cdot \sup_{0\leq s\leq t} J_s^2 + 2\sup_{0\leq s\leq t}\Big(\int_0^s
                         \varphi'(s-u) J_u du\Big)^2
  \\ & \leq 2\,(||\varphi||^2
       + t||\varphi'||^2) \cdot \sup_{0\leq s\leq t} J_s^2.
\end{align*}
\end{proof}
\noindent

\subsection{Reformulation of Hawkes processes}
\label{ss:reformulation}
Alternative descriptions of non-linear Hawkes processes have been given in the literature. Above all, the construction using a Poisson random measures is widely used; see e.g.\ Proposition~3 in \cite{delattre2016hawkes}. Here, we rely on the following construction using time-change equations (see e.g.\ Chapter~6 of \cite{EthierKurtz86}), which we give here without proof. 

\begin{lemma}\label{l:reform}
Let $\mathbb G = (\mathbb V, \mathbb E)$, $(\varphi_{ji})_{ji\in \mathbb E}$ and $(h_i)_{i\in\mathbb V}$ be as in Definition~\ref{def:hawkes}, and let Assumption \ref{ass:basic} hold. A point process $Z = (Z^i)_{i\in\mathbb V}$ is a multivariate, non-linear Hawkes process with interaction kernels $(\varphi_{ji})_{ji\in \mathbb E}$ and transfer functions $(h_i)_{i\in\mathbb V}$, if and only if it is the weak solution of the time-change equations
  \begin{align}\label{eq:Z_ref}
  Z_t^i = Y_i\Big( \int_0^t \lambda_s^i ds\Big) = Y_i\Big( \int_0^t h_i\Big(\sum_{j: ji \in \mathbb E} \int_0^{s-}
  \varphi_{ji}(s-u)dZ_u^j\Big) ds \Big),
\end{align}
where $(Y_i)_{i\in \mathbb V}$ is a family of independent unit rate Poisson processes. 
\end{lemma}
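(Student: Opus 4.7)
The plan is to establish the claimed equivalence in the two obvious directions, using the random time-change theorem for Poisson processes and its converse (Chapter~6 of \cite{EthierKurtz86}). The definition and the time-change equation \eqref{eq:Z_ref} both encode the same information, namely that the intensity of $Z^i$ is $\lambda^i$; the proof is just a translation between the martingale-theoretic and the Poisson-driven formulations.

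For the sufficiency direction, I would start with a weak solution $(Z,(Y_i))$ of \eqref{eq:Z_ref}. Set $A^i_t := \int_0^t \lambda^i_s \, ds$ with $\lambda^i$ as in Definition~\ref{def:hawkes}. Under Assumption~\ref{ass:basic}, $\lambda^i$ is locally bounded and predictable with respect to the filtration generated by $Z$, so $A^i$ is continuous, non-decreasing, and predictable. The standard time-change theorem for Poisson processes then gives that $Y_i(A^i_t) - A^i_t$ is a local martingale, which identifies $A^i$ as the compensator of $Z^i$. Independence of the $Y_i$ together with the continuity of $A^i$ rules out simultaneous jumps of $Z^i$ and $Z^j$ almost surely, so $Z$ satisfies Definition~\ref{def:hawkes}.

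For the necessity direction, given $Z$ with the prescribed compensator, I would construct independent unit rate Poisson processes $Y_i$ so that \eqref{eq:Z_ref} holds. Define $\tau^i_s := \inf\{t \geq 0 : A^i_t > s\}$, the right-continuous inverse of $A^i$, and set $\widetilde Y_i(s) := Z^i_{\tau^i_s}$. The converse time-change theorem (Theorem~1.4 of Ch.~6 in \cite{EthierKurtz86}) shows that each $\widetilde Y_i$ is a unit rate Poisson process on the range of $A^i$, and that the $\widetilde Y_i$ are jointly independent because $Z^i$ and $Z^j$ have no common jumps. On the flat pieces of $A^i$ (where $\lambda^i$ vanishes), I would extend $\widetilde Y_i$ to a full unit rate Poisson process on $[0,\infty)$ using independent Poisson randomness on an enlarged probability space; this extension does not affect the identity $Z^i_t = \widetilde Y_i(A^i_t)$, which is then precisely \eqref{eq:Z_ref}.

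Weak existence, which the statement implicitly requires, follows by a Picard iteration based on \eqref{eq:Z_ref}: under Assumption~\ref{ass:basic} the Lipschitz continuity of $h_i$ and the local integrability of $\varphi_{ji}$ produce a contraction on bounded time intervals in an $L^1$ sense for the intensities. The main obstacle is the rigorous handling of the flat pieces of $A^i$ in the necessity direction: the inverse time change is not defined pathwise there, so an enlargement of the probability space is needed, which is precisely why the equivalence is phrased at the level of weak solutions rather than pathwise. A secondary technical point is to verify compatibility of filtrations, so that $\lambda^i$ is predictable for the filtration to which the (converse) time-change theorem is applied.
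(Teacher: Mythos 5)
The paper offers no proof of this lemma to compare against: it is stated explicitly ``without proof,'' with a pointer to Chapter~6 of \cite{EthierKurtz86}, and well-definedness of the law under Assumption~\ref{ass:basic} is delegated to Remark~5 of \cite{delattre2016hawkes}. Your sketch is exactly the standard argument that citation stands for --- the time-change theorem in one direction and the inverse (Meyer/Watanabe-type) time change plus an enlargement for the flat pieces in the other --- so it follows the route the paper intends, and it is correct in outline.

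Two points deserve more care than you give them. First, your claim that $\lambda^i$ is locally bounded is neither implied by Assumption~\ref{ass:basic} (the $h_i$ are only Lipschitz, hence of affine growth, and the $\varphi_{ji}$ only locally integrable) nor needed; what the argument actually requires is non-explosion, i.e.\ $\int_0^t \lambda^i_s\,ds<\infty$ a.s.\ for all $t$, which follows from a Gronwall bound on the expected number of points using precisely the Lipschitz growth of $h_i$ and local integrability of $\varphi_{ji}$. Second, what you dismiss as a ``secondary technical point'' --- compatibility of filtrations --- is in fact the substantive content of the Ethier--Kurtz machinery: because $A^i$ is built from $Z$, which in turn is built from \emph{all} the $Y_j$'s through \eqref{eq:Z_ref}, the random time change is genuinely multiparameter, and one needs the multiparameter stopping-time/compatibility framework of Chapter~6 of \cite{EthierKurtz86} to conclude that $Y_i(A^i_t)-A^i_t$ is a martingale, and correspondingly a multivariate Meyer-type theorem (continuous compensators, no common jumps) to get that the inverse-time-changed processes are \emph{independent} unit-rate Poisson processes --- ``no common jumps'' is the hypothesis of that theorem, not a one-line justification. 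The Picard iteration for weak existence is fine (it is essentially the argument behind Remark~5 of \cite{delattre2016hawkes}), though existence is not strictly part of the stated equivalence.
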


\noindent
Under Assumption~\ref{ass1}, the time-change equations from this lemma read, with independent unit rate Poisson processes $Y_1,...,Y_N$,
\begin{align}
  Z_t^{N,i} & = Y_i\Big(\int_0^t h\Big(\sum_{j = 1}^N\int_0^{s-} \theta_N U_j\varphi(s-u)dZ_u^{N,j}\Big)ds\Big).
  \intertext{We rewrite this as}
  \label{eq:timechange0}    Z_t^{N,i} & = Y_i\Big(\int_0^t  h(I_s^N) ds\Big)
  \intertext{with}
  \label{eq:JN} I_s^N & = \int_0^{s-} \varphi(s-u) dJ_u^N \quad \text{ and } \quad J_u^N
                                                            = \theta_N \sum_{j=1}^N U_j Z_u^{N,j}.
\end{align}
Now, we rewrite $J^N$ further. {\color{black} Observe that 
$\sum_{j: U_j=1} Y_j$ is a Poisson process
with rate $\tfrac 12 \sum_{j=1}^N(U_j+1) = pN + \tfrac 12 \sqrt{N}W_N$
(recall $W_N$ from \eqref{eq:CLT}) and
$\sum_{j: U_j=-1} Y_j$ is a Poisson process with
rate $(1-p)N - \tfrac 12 \sqrt{N}W_N$. Hence} we have that, for two independent unit rate Poisson processes $Y^+$ and $Y^-$,
\begin{equation}
  \label{eq:9231}
  \begin{aligned}
    J_t^N & = \theta_N\sum_{j=1}^N U_j Y_j\Big(\int_0^t
    h(I_s^N)ds\Big) \\ & = \theta_N Y^+\Big(\big(pN + \tfrac 12
    \sqrt{N}W_N\big) \int_0^t h(I_s^N)ds \Big) - \theta_N
    Y^-\Big(\big((1-p)N - \tfrac 12 \sqrt{N}W_N\big) \int_0^t
    h(I_s^N)ds \Big).
  \end{aligned}
\end{equation}
We extend our probability space such that the rescaled Poisson processes $Y^+, Y^-$ converge to Brownian motions $B^+, B^-$ almost surely (as in \eqref{eq:1a}).  Consider the Brownian motion $\hat B = (\hat B(t))_{t\geq 0}$ with $\hat B_t = B^+_{pt} - B^-_{(1-p)t}$. Since $\Big(\hat B\Big( \int_0^t h(I_s^N)ds\Big)\Big)_{t\geq 0}$ is a continuous martingale, we can extend the probability space by another Brownian motion $B^N$, such that
\begin{align}
  \label{eq:bm}
  \hat B\Big( \int_0^t h(I_s^N)ds\Big) = \int_0^t \sqrt{h(I_s^N)} dB_s^N \quad \text{for } t\geq 0, 
\end{align}
by \cite[Theorem 16.10]{b2Kallenberg2002}.

\subsection{Proof of Theorem~\ref{T2}}
\label{ss:proofT2}
First, strong existence and uniqueness for \eqref{eq:SDE} follows from \cite[Theorem 3.A]{berger1980volterra}. For the convergence result, we {\color{black}work on a probability space where the convergence $W_N\to W$ from \eqref{eq:CLT} holds almost surely. We} will show in Steps~1~and~2 convergence of $J^N$ to $J$ with
\begin{align}
  \label{eq:J}
  J_t = W\int_0^t h(I_s) ds + \int_0^t \sqrt{h(I_s)} dB_s
\end{align} 
and from $I^N$ to $I$ from \eqref{eq:SDE} with $w$ replaced by $W$. Step~3 then gives the desired convergence of $(I^N,Z^{N,1},...,Z^{N,n})$. We use processes $\widetilde J^N$ and $\widetilde I^N$ which are unique strong solutions of
\begin{align*}
\widetilde{J}^N_t & = W_N \int_{0}^{t} h \big( \widetilde{I}_s^N \big) ds + \int_{0}^{t} \sqrt{h( \widetilde{I}_s^N )} dB_s^N,\\
\widetilde{I}_s^N & = \int_{0}^{s} \varphi(s-r) d\widetilde{J}^N_r,
\end{align*}
where $B^N$ is the Brownian motion defined by \eqref{eq:bm}. {\color{black}Again, strong existence and uniqueness follows from \cite[Theorem 3.A]{berger1980volterra}}. In Step~1, we will show that $J^N - \widetilde J^N \to 0$ and $I^N - \widetilde I^N \to 0$ uniformly on compact sets in $L^2${\color{black}, conditional on $(U_i)_{i\in\mathbb N}$. In Step~2, we show the remaining conditional convergence $\widetilde{I}^N - I \to 0$ and conclude $I^N - I \to 0$ in probability.}
  
\noindent
{{\bf Step 1:} {\em Convergence $J^N - \widetilde J^N \to 0$ and $I^N - \widetilde I^N \to 0$.}}\\
  Recall $Y^+$ and $Y^-$ from  Section~\ref{ss:reformulation}. For $\theta_N = \tfrac{1}{\sqrt{N}}$ and $p = \tfrac 12$ in \eqref{eq:9231} we get that,
  $$J_t^N = \frac{1}{\sqrt{N}} Y^+\Big(\tfrac{N + \sqrt{N}W_N}{2} \int_0^t h(I_s^N)ds \Big) - \frac{1}{\sqrt{N}}
    Y^-\Big(\tfrac{N - \sqrt{N}W_N}{2} \int_0^t
    h(I_s^N)ds \Big).$$
  We write
  \begin{equation}\label{eq:diff}
    \begin{aligned}
      J_t^N - \widetilde{J}_t^N & = A_t^N + W_NC_t^N + D_t^N \quad \text{with}\\
      A_t^N & = \frac{1}{\sqrt{N}} Y^+ \Big( \frac{N + \sqrt{N} W_N
      }{2}\int_0^t h(I_s^N) ds \Big)
      - \frac{1}{\sqrt{N}} Y^- \Big( \frac{N - \sqrt{N} W_N }{2}\int_0^t h(I_s^N) ds \Big)\\
      & \qquad \qquad \qquad \qquad \qquad \qquad \qquad \qquad - W_N
      \int_0^t h(I_s^N) ds - \hat B\Big(\int_0^t h(I_s^N) ds\Big),\\
      C_t^N & =  \int_0^t h(I_s^N) ds - \int_0^t h(\widetilde{I}_s^N) ds,\\
      D_t^N & = \int_0^t \sqrt{h(I_s^N)} dB^N_s - \int_0^t
      \sqrt{h(\widetilde I_s^N)} dB^N_s.
    \end{aligned}
  \end{equation}
We make the disclaimer, that we will use $C>0$ for a variable, which only depends on $t, h, \varphi$, and which might change from line to line. {\color{black} We write $\mathbb E_U$ for the conditional expectation with respect to $U=(U_i)_{i\in\mathbb N}$.} The function $h$ is bounded, therefore, for some unit rate Poisson process $Y$,
  \begin{equation}
    \label{eq:9123}
    \begin{aligned}
      \mathbb E{\color{black}_U}[\sup_{0\leq s\leq t} (A_s^N)^2] & \leq \mathbb
      E{\color{black}_U}\Big[\sup_{0\leq s\leq t||h||} \Big(\frac{1}{\sqrt{N}} Y^+
      \Big( \frac{N + \sqrt{N} W_N }{2}s \Big)
      - \frac{1}{\sqrt{N}} Y^- \Big( \frac{N - \sqrt{N} W_N }{2} s \Big)\\
      & \qquad \qquad \qquad \qquad \qquad \qquad \qquad \qquad \qquad
      \qquad \qquad - W_N s - \hat B(s)\Big)^2\Big] \\ & \leq C \cdot
      \Big(\mathbb E{\color{black}_U}\Big[\sup_{0\leq s\leq t||h||} \Big(
      \frac{1}{\sqrt{N}}(Y^+(Ns/2) - Ns/2) - B^+(s/2)\Big)^2\Big] \\ &
      \qquad + \mathbb E{\color{black}_U}\Big[\sup_{0\leq s\leq t||h||} \Big(
      \frac{1}{\sqrt{N}}(Y^-(Ns/2) - Ns/2) - B^-(s/2)\Big)^2\Big] \\ &
      \qquad + \mathbb E{\color{black}_U}\Big[\sup_{0\leq s\leq |W_N|t||h||} \Big(
      \frac{1}{\sqrt{N}}(Y(\sqrt N s)) - s\Big)^2\Big]\Big)
      \\ & {\color{black} \leq o(1) + C \cdot \mathbb E_U\Big[\Big(
      \frac{1}{\sqrt{N}}(Y(\sqrt N |W_N|t||h||s)) - |W_N|t||h||s\Big)^2\Big]}
      \\ & {\color{black}\leq o(1) + C \frac 1{\sqrt{N}}W_N}
      \xrightarrow{N\to\infty} 0
    \end{aligned}
  \end{equation}
  {\color{black}almost surely,} by Lemma~\ref{l:poi} and Doob's inequality. Then, using Jensen,
  \begin{align*}
    \mathbb E{\color{black}_U}[\sup_{0\leq s\leq t} (W_NC_s^N)^2]
    & {\color{black}\leq \sup_N W_N^2} \mathbb E{\color{black}_U}\Big[\sup_{0\leq s\leq t} \Big( \int_0^s h(I_r^N) - h(\widetilde I_r^N) dr\Big)^2\Big] 
    \\ & \leq {\color{black}\sup_N W_N^2}
      t h_{Lip}^2 \int_0^t \mathbb E{\color{black}_U}[ (I_s^N - \widetilde I_s^N)^2]  ds
  \end{align*}
  {\color{black}Note that $\sup_N W_N^2<\infty$, as $W_N$ converges to $W$, which is finite almost surely.} By Doob's inequality and Ito's isometry,
  \begin{align*}
    \mathbb E{\color{black}_U}[\sup_{0\leq s\leq t} (D_s^N)^2]
    & \leq C\cdot 
      \mathbb E{\color{black}_U}[(D_t^N)^2] = C\cdot \mathbb E{\color{black}_U} \Big[\Big( \int_0^t \sqrt{h(I_s^N)} dB^N_s - \int_0^t
      \sqrt{h(\widetilde{I}_s^N)} dB^N_s \Big)^2 \Big]
    \\& = C\cdot \bE{\color{black}_U} \Big[ \int_0^t \big(\sqrt{h(I_s^N)} - \sqrt{h(\widetilde{I}_s^N)}\big)^2 ds \Big]\\
    & \leq C\cdot \int_0^t \mathbb E{\color{black}_U}[(I_s^N - \widetilde{I}_s^N)^2 ] ds.
  \end{align*}
  In order to bound $\sup_{0\leq s\leq t} (W_N C_s^N)^2$ and
  $\sup_{0\leq s\leq t} (D_s^N)^2$ by $\sup_{0\leq s\leq t}(J_s^N - \widetilde J_s^N)^2$,
  apply Lemma~\ref{l:conv} to $J_s^N - \widetilde{J}_s^N$. We obtain
  \begin{align*}
    \mathbb E{\color{black}_U} \big[ \sup_{s \leq t} ( J_s^N - \widetilde{J}_s^N )^2 \big]
    & \leq C\cdot 
      \mathbb E{\color{black}_U}[\sup_{0\leq s\leq t} (A_s^N)^2] +
      C\cdot \int_0^t \mathbb E{\color{black}_U}[(I_s^N - \widetilde{I}_s^N)^2 ] ds
    \\ & \leq C\cdot \mathbb E{\color{black}_U}[\sup_{0\leq s\leq t} (A_s^N)^2]
         + C\cdot \int_0^t \mathbb E{\color{black}_U}[\sup_{r\leq s} (J_r^N - \widetilde{J}_r^N)^2 ] ds.
  \end{align*}
  By the Gronwall inequality and \eqref{eq:9123}, we conclude that,
  for all $t\geq 0$,
  \begin{align*}
    \mathbb E{\color{black}_U} \big[ \sup_{s \leq t} ( J_s^N - \widetilde{J}_s^N )^2 \big]
    & \xrightarrow{N\to\infty} 0
      \intertext{and with Lemma~\ref{l:conv} also}
      \mathbb E{\color{black}_U} \big[ \sup_{s \leq t} ( I_s^N - \widetilde{I}_s^N )^2 \big] & \xrightarrow{N\to\infty} 0,
  \end{align*}
  {\color{black}almost surely.}
  
  \noindent
  {{\bf Step 2:} {\em Convergence $\widetilde I^N - I \to 0$}}\\
  \noindent
  Since $B^N$ is a Brownian motion, we have that, in distribution,
  \begin{align*}
    \widetilde I_t^N & = W_N \int_0^t \varphi(t-s) h(\widetilde I_s^N) ds + \int_0^t
                       \varphi(t-s) \sqrt{h(\widetilde I_s^N)} dB_s^N,
    \\
    I_t & = W \int_0^t \varphi(t-s) h(I_s) ds + \int_0^t \varphi(t-s) \sqrt{h(\widetilde I_s)} dB_s^N.
  \end{align*}
  Therefore, on this probability space,
  \begin{align*}
    \widetilde I_t^N - I_t & = E_t^N + F_t^N + G_t^n \text{ with}\\
                             E_t^n & := (W_N-W)\int_0^t \varphi(t-s) h(I_s) ds,\\
    F_t^N & := W_N \int_0^t \varphi(t-s) (h(\widetilde I_s^N) - h(I_s)) ds,\\
    G_t^N & := \int_0^t \varphi(t-s) \Big(\sqrt{h(\widetilde I_s^N)} - \sqrt{h(I_s)}\Big) dB_s^N.
  \end{align*}
  Since, again using some constant $C<\infty$, which can change from
  line to line,
  \begin{align*}
    \mathbb E{\color{black}_U}[\sup_{0\leq s\leq t} (E_s^N)^2] & \leq C \cdot {\color{black}(W_N-W)^2} \xrightarrow{N\to\infty} 0,\\
    \mathbb E{\color{black}_U}[\sup_{0\leq s\leq t} (F_s^N)^2] & \leq C \cdot {\color{black}\sup_N W_N^2}\int_0^t \mathbb E{\color{black}_U}[\sup_{0 \leq r\leq s} (\widetilde I_r^N - I_r)^2 ] ds,\\
    \mathbb E{\color{black}_U}[\sup_{0\leq s\leq t} (G_s^N)^2] & \leq C \cdot \int_0^t \mathbb E{\color{black}_U}[\sup_{0 \leq r\leq s} (\widetilde I_r^N - I_r)^2 ] ds,
    \intertext{  we find that}
    \mathbb E{\color{black}_U}[\sup_{0 \leq s\leq t} (\widetilde I_s^N - I_s)^2 ] & \leq C \cdot {\color{black}(W_N-W)^2} + C \cdot \int_0^t \mathbb E{\color{black}_U}[\sup_{0 \leq r\leq s} (\widetilde I_r^N - I_r)^2 ] ds,
  \end{align*}
  and the Gronwall inequality again implies that
  $\mathbb E{\color{black}_U}[\sup_{0 \leq s\leq t} (\widetilde I_s^N - I_s)^2 ]
  \xrightarrow{N\to\infty} 0${\color{black}, almost surely. Combining Step~1 and Step~2 we obtain $\mathbb E_U[\sup_{0 \leq s\leq t} (I_s^N - I_s)^2 ]
  \xrightarrow{N\to\infty} 0$. We deduce $\mathbb E_U[ 1 \wedge \sup_{0 \leq s\leq t} (I_s^N - I_s)^2 ]
  \xrightarrow{N\to\infty} 0$ and, by dominated convergence, $\mathbb E[1 \wedge\sup_{0 \leq s\leq t} (I_s^N - I_s)^2 ]
  \xrightarrow{N\to\infty} 0$. Hence we have shown that $\sup_{0 \leq s\leq t} (I_s^N - I_s)^2\to 0$ in probability.} 

  \noindent
{\bf Step 3:} {\em Convergence of $(I^N,Z^{N,1},...,Z^{N,n})$}\\
\noindent
\sloppy The predictable quadratic variation of the compensated process $Z^{N,i}_t-\int_0^t h(I_s^N) ds$ is given by $\langle Z^{N,i} - \int_0^\cdot h(I_s^N) ds \rangle_t = \int_0^t h(I_s^N) ds$. By \cite{jacod2013limit}[Chapter VI, Corollary 3.33, Theorem 4.13], tightness of $(Z^{N,1},...,Z^{N,n})$ follows from convergence of $\sum_{i\leq n} \langle Z^{N,i} - \int_0^\cdot h(I_s^N) ds \rangle_t = n \int_0^t h(I_s^N) ds$. We already know that $I^N \Rightarrow I$, hence $(I^N,Z^{N,1},...,Z^{N,n})$ is tight. To identify the limit, let us try to extend the state space such that the Hawkes process becomes Markovian. Recall that each $Z^{N,i}$ jumps at time $t$ at rate $h(I^N(t))$ with $I^N(t) = \int_0^t \varphi(t-s) dJ^N_s$ and $J_s^N$ as given in \eqref{eq:JN}. The process $J^N$ jumps at time $t$ from $x$ to $x + \theta_N$ at rate $h\Big( \int_0^t \varphi(t-s) dJ^N_s\Big) \cdot u^+$ with $u^+ := \sum_{j} 1_{U_j=1}$ and to $x - \theta_N$ at rate $h\Big( \int_0^t \varphi(t-s) dJ^N_s\Big) \cdot u^-$ with $u^- = \sum_{j}1_{U_j=-1} = N - u^+$.  Note that the process $J^N$ satisfies, for smooth $f$,
\begin{align*}
& \frac{\mathbb E\Big[ f(J^N_{t+\varepsilon})
	- f(J^N_t) | \mathcal F_t \Big]}{\varepsilon}
\\ & = h(I_t^N) u^+ \Big[ f(J^N_t + \theta_N )  - f(J^N_t) \Big]
+ h(I_t^N) u^- \Big[ f(J^N_t - \theta_N )  - f(J^N_t) \Big] + o(\varepsilon)
\\ & =  f'(J^N_t) \cdot 
\Big( h(I_t^N) \underbrace{(u^+_N - u^-_N) \theta_N}_{=W_N} \Big) + \tfrac 12 f''(J^N_t) \cdot h(I_t^N) \underbrace{(u^+_N + u^-_N)\theta_N^2}_{=1}   + o(\varepsilon)
\\ & =  f'(J^N_t) \cdot h(I_t^N) W_N + \tfrac 12 f''(J^N_t) \cdot h(I_t^N) + o(\varepsilon).
\end{align*}
We obtain a similar expression if we ignore the jumps of $Z^{N,i}, i=1,...,n$, i.e. if we replace $J^N$ by $J^{N(n)}=\theta_N \sum_{j>n}U_jZ^{N,j}$. In this case we have to replace $W_N$ by $W_{N(n)}=\theta_N \sum_{j>n}U_j$ and $(u^+_N + u^-_N)\theta_N^2$ by $(N-n)/N$. In the limit $N\to\infty$ we obtain the same expression, as $W_{N(n)}\to W$ and  $(N-n)/N \to 1$. From now on write $J_{[0,t]}$ for the path of a process $J$ on the interval $[0,t]$ and
\begin{align*}
  G_J^{N(n)}f(J_{[0,t]}^{\color{black}N}) & = f'(J_t^{\color{black}N}) \cdot 
  h(I_t^{\color{black}N}) W_{N(n)} + \tfrac {N-n}{2N} f''(J_t^{\color{black}N}) \cdot h(I_t^{\color{black}N})
  \\ G_Jf(J_{[0,t]}) & = f'(J_t) \cdot 
  h(I_t) W + \tfrac 12 f''(J_t) \cdot h(I_t)
\end{align*}
The generator $G^N$ of the Markov process $\big(J^N_{[0,t]},Z^{N,1}_t,...,Z^{N,n}_t\big)$ with domain $\mathcal D$ consisting of functions of the form $f_{0,...,n}(J_{[0,t]},k_1,...,k_n)=f_0(J_t)f_1(k_1)...f_n(k_n)$, where we suppose the functions $f_1,...,f_n$ to be bounded functions on $\mathbb N$, and $f_0$ to be a smooth function on $\mathbb R$ with bounded second derivative, is given by
\begin{align*}
  G^Nf_{0,..,n}(J_{[0,t]}^{\color{black}N},Z^{N,1}_t&,...,Z^{N,n}_t) =  G^{N(n)}_Jf_0(J_{[0,t]}^{\color{black}N})\prod_{i=1}^n f_i(Z^{N,i}_t) 
  \\ &+ h(I_t)\sum_{j = 1}^n\big( f_0(J_t + \theta_N U_j)f_j(Z^{N,j}_t+1) - f_0(J_t)f_j(Z^{N,j}_t) \big) \prod_{i \neq j} f_i(Z^{N,i}_t).
\end{align*}
By \cite{EthierKurtz86}[Chapter 4, Proposition 1.7],
\begin{align*}
f_0(J^N_t)\prod_{i=1}^{n}f_i(Z^{N,i}_t) - \int_0^t G^N f_0...f_n(J_{[0,s]}^{\color{black}N},Z^{N,1}_s,...,Z^{N,n}_s) ds
\end{align*}
is a martingale with respect to the filtration generated by $J,Z^{N,i}$. It is equivalent that
\begin{align}
\mathbb E\Big[ \Big(f_0(J^N_t)\prod_{i=1}^{n}f_i(Z^{N,i}_t) - f_0(J^N_s)\prod_{i=1}^{n}f_i(Z^{N,i}_s) - \int_s^t G^N f_0...f_n(J_{[0,r]}^{\color{black}N},Z^{N,1}_r,...,Z^{N,n}_r) ds\Big)\nonumber
\\ \prod_{j=1}^{m}g_j(J_{[0,t_j]}^{\color{black}N},Z^{N,1}_{t_j},...,Z^{N,n}_{t_j}) \Big]=0, \label{eq:MP}
\end{align}
for all $m\in\mathbb N, 0\leq t_1 \leq ... \leq t_m \leq s < t$ and continuous functions $g_j$. To identify the limit, assume $\big(J^N,Z^{N,1},...,Z^{N,n}\big) \Rightarrow \big(\bar J,\bar Z^{1},...,\bar Z^{n}\big)$ in distribution with respect to Skorohod distance. By continuity of projection, $J^N \Rightarrow \bar J$, hence $\bar J=J$. In Step 1 and 2 we have already shown the stronger convergence $J^N \to J$ and $I^N \to I$ with respect to uniform topology. As $G^Nf$ converges to some $Gf$, $\theta_N\to 0$ and the evaluation at timepoints $t$ is continuous with respect to uniform topology, we can pass to the limit in \eqref{eq:MP} and obtain that 
\begin{align}\label{eq:MPlimit}
f_0(J_t)\prod_{i=1}^{n}f_i(\bar Z^{i}_t) - \int_0^t G f_{0,...,n}(J_s,\bar Z^{1}_s,...,\bar Z^{n}_s) ds
\end{align}
is a martingale for any $f_{0,...,n} \in \mathcal D$, where 
\begin{align*}
Gf_{0,..,n}(J_{[0,t]},Z^{1}_t,...,Z^{n}_t) = & G_Jf_0(J_{[0,t]})\prod_{i=1}^n f_i(Z^{i}_t) 
\\ & + h(I_t)f_0(J_t)\sum_{j = 1}^n\big( f_j(Z^{j}_t+1) - f_j(Z^{j}_t) \big) \prod_{i \neq j} f_i(Z^{i}_t).  
\end{align*}
It remains to read off the distribution of $(J,\bar Z^1,...,\bar Z^n)$ from \eqref{eq:MPlimit}. Choose $f_0 = 1$ and $f_i(k) = \exp(-u_i k)$ for $u_j\geq 0$. Then 
\begin{align*}
0 = & \mathbb E \Big[ f_0(J_t)\prod_{i=1}^{n}f_i(\bar Z^{i}_t) - \int_0^t G f_0...f_n(J_s,\bar Z^{1}_s,...,\bar Z^{n}_s) ds \Big]
\\ = & \mathbb E \big[ \prod_{i=1}^{n} e^{-u_i \bar Z^{i}_t} - \int_0^t h(I_s)\sum_{j = 1}^n \big( e^{-u_j}-1 \big) \prod_{i=1}^n e^{-u_i \bar Z^{i}_s} ds \big]
\\ = & \mathbb E \Big[ \mathbb E \big[\prod_{i=1}^{n} e^{-u_i \bar Z^{i}_t} | I \big] - \int_0^t h(I_s)\sum_{j = 1}^n \big( e^{-u_j}-1 \big) \mathbb E \big[ {\color{black}\prod_{i=1}^n } e^{-u_i \bar Z^{i}_s} | I \big] ds \Big],
\end{align*}
{\color{black}where, in the last equality, we have interchanged the $ds$-integral and conditional expectation with respect to $I$, and used the $I$-measurability of $h(I)$}. The function in brackets is nonnegative, hence
\begin{align*}
\mathbb E \Big[\prod_{i=1}^{n} e^{-u_i \bar Z^{i}_t} | I \Big] = \prod_{j = 1}^n
\exp \Big( (e^{-u_j}-1) \int_0^t h(I_s) ds \Big).
\end{align*}
As claimed, given $I$ the processes $\bar Z^i$ are independent poisson processes with intensity at time $t$ given by $h(I_t)$.

\subsection{Proof of Theorem~\ref{T1}}
\label{ss:proofT1}
{\bf Proof of 1.:}\\
Define
\begin{align}
  J_t = (2p-1) \int_0^t h(I_s)ds, \text{ such that } I_t = \int_0^t \varphi(t-s)dJ_s.
\end{align}
For $\theta_N = \tfrac{1}{N}$ in \eqref{eq:9231} we get that,
\begin{align*}
  J_t^N - J_t & = \frac 1N\Big( Y^+\Big(\big( pN + \tfrac 12 \sqrt{N}W_N\big) \int_0^t h(I_s^N) ds\Big) -
                Y^+\Big(pN \int_0^t h(I_s^N) ds\Big)\Big)
  \\  & \quad - \frac 1N\Big( Y^-\Big(\big( (1-p)N - \tfrac 12 \sqrt{N}W_N\big) \int_0^t h(I_s^N) ds\Big) -
        Y^-\Big((1-p)N \int_0^t h(I_s^N) ds\Big)\Big)
  \\ & \qquad \qquad + \frac 1N Y^+\Big(pN \int_0^t h(I_s^N) ds\Big) - p \int_0^t h(I_s^N) ds
  \\ & \qquad \qquad - \frac 1N Y^-\Big((1-p) N \int_0^t h(I_s^N) ds\Big) + ( 1 - p ) \int_0^t h(I_s^N) ds
  \\ & \qquad \qquad \qquad \qquad \qquad \qquad \qquad + (2p-1) \int_0^t h(I_s^N) - h(I_s) ds,
\end{align*}
so
\begin{align}
  \notag
  \sup_{0\leq s\leq t} ( J_s^N - J_s )^2
  &\leq  \varepsilon_N + C \cdot \Big(\sup_{0\leq s\leq t||h||} \Big(\frac 1N
       Y^+\Big(pN s \Big)
       - p s\Big)^2 + \Big(\frac 1N Y^-\Big((1-p)Ns \Big) - (1-p) s\Big)^2\Big) \notag
  \\ & \notag \qquad \qquad \qquad + (2p-1)^2 h_{Lip} \cdot \sup_{0\leq s\leq t} \int_0^s (I_r^N - I_r)^2)ds
  \\ &   \label{eq:721}\qquad \qquad \qquad \qquad \qquad \qquad \leq \varepsilon_N + C \cdot
       \int_0^t \sup_{0\leq r\leq s}(I_r^N - I_r)^2ds
\end{align}
with $\varepsilon_N \xrightarrow{N\to\infty}0$ (by Lemma~\ref{l:poi}). 
By the Gronwall inequality and Lemma~\ref{l:conv} we can conclude
\begin{align}\label{eq:IN0}
  \sup_{0\leq s\leq t} \Big( J_s^N - J_s \Big)^2 \xrightarrow{N\to\infty} 0\qquad \text{and}\qquad \sup_{0\leq s\leq t} \Big( I_s^N - I_s \Big)^2 \xrightarrow{N\to\infty} 0.
\end{align}

\noindent
{\bf Proof of 2.:}\\
Define $\bar Z^i_t = Y_i\big( \int_0^t h(I_s)ds\big)$, where $Y_1,...,Y_N$ are independent Poisson process as in \eqref{eq:timechange0}. Fix $\omega\in\Omega$, such that $\sup_{0\leq s\leq t} | I_s^N(\omega) - I_s | \rightarrow 0$. As $h$ is Lipschitz, $\int_0^t h(I_s^N(\omega))ds \rightarrow \int_0^t h(I_s)ds$, hence for any point of continuity of $t\mapsto \bar Z^i_t(\omega)$ we can conclude $Z^i_t(\omega) \rightarrow \bar Z^i_t(\omega)$. As the points of continuity are dense in $[0,\infty)$, convergence in Skorohod-distance follows from \cite{jacod2013limit}[Theorem 2.15 c)(ii)]. 

\noindent
{\bf Proof of 3.:}\\
First, strong existence and uniqueness for \eqref{eq:SDE_K}
follows from \cite{berger1980volterra}. The proof proceeds similarly to the proof of Theorem \ref{T2}, Step~1~and~2. Wlog, we assume that $W_N \to W$ almost surely and in $L^2$. Recall the pair $(G,K)$ from \eqref{eq:SDE_K} (for some Brownian motion $B$). 
Further, we define the auxiliary processes $\widetilde{G}^N,\widetilde{K}^N$ by (recall from \eqref{eq:bm} the Definition of the Brownian motion $B^N$)
\begin{equation}\label{eq:tildeK}
\begin{aligned}
  \widetilde{K}^N_t & = \int_0^t \varphi(t-s)d\widetilde{G}^N_s
  \\ \widetilde{G}^N_t & = \int_0^t Wh(I_s) + (2p-1) h'(I_s) \widetilde{K}^N(s)ds
  + \int_0^t \sqrt{h(I_s)} dB^N_s.
\end{aligned}
\end{equation}
In our proof, we will first show convergence of $\sqrt N (J^N-J) - \widetilde{G}^N \rightarrow 0$ and $\sqrt N (I^N-I) - \widetilde{K}^N \rightarrow 0$. We then replace $B^N$ by $B$ and find $\widetilde{K}^N \Rightarrow K$, since $(\widetilde K^N, \widetilde G^N) \sim (K,G)$. The process $\sqrt N (J^N-J) - \widetilde{G}^N$ is described by the following equation:
\begin{align*}
  & \sqrt N (J^N_t - J_t) - \widetilde{G}^N_t = A^N_t + C^N_t + D^N_t + (2p-1) E^N_t 
  \intertext{with}
  A_t^N & = \frac{1}{\sqrt{N}} Y^+ \Big( \Big(pN + \frac{\sqrt{N} W_N
  }{2}\Big)\int_0^t h(I_s^N) ds \Big)
  - \frac{1}{\sqrt{N}} Y^- \Big( \Big((1-p)N - \frac{\sqrt{N} W_N }{2}\Big)\int_0^t h(I_s^N) ds \Big)\\
  & \qquad \qquad \qquad \qquad - \sqrt N (2p-1) \int_0^t h(I_s^N) ds - W_N
  \int_0^t h(I_s^N) ds - \hat B\Big(\int_0^t h(I_s^N) ds\Big),\\
  C_t^N & =  W_N \int_0^t h(I_s^N) ds - W \int_0^t h({I}_s) ds,\\
  D_t^N & = \int_0^t \sqrt{h(I_s^N)} dB^
  N_s - \int_0^t
  \sqrt{h(I_s)} dB^N_s,\\
  E_t^N & = \int_0^t \sqrt N \big(h(I^N_s)-h(I_s)\big) - h'(I_s)\widetilde{K}^N_s ds.
\end{align*}
As in the proof of Theorem~\ref{T2}, Step~1, we can show that $C^N, D^N \to 0$ (as we already know that $I^N\to I$ and $W_N \rightarrow W$) uniformly on compact time intervals in $L^2$. The uniform $L^2$-convergence $A^N \to 0$ follows from Lemma~\ref{l:poi}. For $E^N$ we obtain
\begin{align*}
  E^N_t = \int_0^t \big(\sqrt N(I^N_s-I_s)-\widetilde{K}^N_s\big)h'(I^N_s)+\sqrt N R_1h(I^N_s,I_s) ds,
\end{align*}
where $R_1h(I^N_s,I_s)$ is the first order remainder in Taylor's formula. Using the Peano form of the remainder we obtain for some $\xi^N_s$ between $I^N_s$ and $I_s$ 
\begin{align*}
  \Big(\sqrt N R_1h(I^N_s,I_s)\Big)^2& =   N \Big((h'(\xi^N_s) - h'(I_s))^2 (I^N_s-I_s)^2\Big)\\
  & \leq (h'_{Lip})^2 N (I^N_s-I_s)^4.
\end{align*}
Therefore
\begin{align*}
  \mathbb E\Big[ \sup_{0\leq s\leq t} \big(E^N_s\big)^2 \Big] & \leq 2 \mathbb E\Big[ \sup_{0\leq s\leq t} \Big(\int_0^s \big(\sqrt N(I^N_u-I_u)-\widetilde{K}^N_u\big)h'(I^N_u) du\Big)^2 \Big]\\
  & \qquad \qquad + 2\mathbb  E\Big[ \sup_{0\leq s\leq t} \Big(\int_0^t \big(\sqrt N R_1h(I^N_u,I_u)\big)du \Big)^2 \Big]\\
  & \leq 2t ||h'||^2 \int_0^t \mathbb E\Big[ \big(\sqrt N(I^N_s-I_s)-\widetilde{K}^N_s\big)^2 \Big]ds\\
  & \qquad \qquad + 2t^2(h'_{Lip})^2 \mathbb E\Big[ \sup_{0\leq s\leq t} N (I^N_s-I_s)^4 \Big].
\end{align*}
We can show convergence of the second term similarly to the proof of 1., as 
\begin{align*}
  N\cdot \mathbb E\Big[\sup_{0\leq s\leq t} \Big(\frac{1}{N}(Y^\pm(Ns)-Ns)\Big)^4\Big]
  & \leq \Big(\frac 43\Big)^4 \frac{1}{N^3} \mathbb E\Big[ \Big((Y^\pm(Nt)-Nt)\Big)^4 \Big]\\
  & = \Big(\frac 43\Big)^4 \Big(\frac{3t^2}{N} + \frac{t}{N^2}\Big) \xrightarrow{N\to\infty} 0.
\end{align*}
For the first term we apply Lemma~\ref{l:conv} in order to obtain
\begin{align*}
  \mathbb E\Big[ \sup_{0\leq s\leq t} \Big(\big(\sqrt N(I^N_s-I_s)-\widetilde{K}^N_s\big)\Big)^2 \Big] \leq C \cdot \mathbb E\Big[ \sup_{0\leq s\leq t} \Big(\big(\sqrt N(J^N_s-J_s)-\widetilde{G}^N_s\big)\Big)^2 \Big].
\end{align*}
Summing up the results on $A^N, C^N, D^N$ and $E^N$ we conclude
\begin{align*}
  \mathbb E\Big[ \sup_{0\leq s\leq t} \big(\sqrt N (J^N_s-J_s) - \widetilde{G}^N_s\big)^2 \Big]\leq \varepsilon_N + C \int_0^t \mathbb E\Big[ \sup_{0\leq u\leq s} \big(\sqrt N (J^N_u-J_u) - \widetilde{G}^N_u\big)^2 \Big] ds
\end{align*}
for $\varepsilon_N \to 0$ and by the Gronwall inequality
\begin{align*}
  \mathbb E\Big[ \sup_{0\leq s\leq t} \big(\sqrt N (J^N_s-J_s) - \widetilde{G}^N_s\big)^2 \Big]\xrightarrow{N\to\infty} 0.
\end{align*} 
Applying Lemma~\ref{l:conv} it follows that
\begin{align*}
  \mathbb E\Big[ \sup_{0\leq s\leq t} \big(\sqrt N (I^N_s-I_s) - \widetilde{K}^N_s\big)^2 \Big]\xrightarrow{N\to\infty} 0.
\end{align*}

\subsection{Proof of Corollaries \ref{cor1} and \ref{cor2}}
\label{ss:proofCors}

\begin{proof}[Proof of Corollary \ref{cor1}]
Recall $Y^+$ and $Y^-$ from Section~\ref{ss:reformulation}, and which are used frequently in the proof of Theorem~\ref{T1}. For \eqref{eq:cor11}, using \eqref{eq:timechange0} and Lemma \ref{l:poi}, for the Poisson process $Y = (Y(t))_{t\geq 0}$, given by $Y(t) = Y^+(pt) + Y^-((1-p)t)$,
\begin{equation}\label{eq:cor11b}
\begin{aligned}
    \sup_{0\leq t \leq T} & \Big|\frac{1}{N}\sum_{j=1}^N \Big(Z_t^{N,j} - \int_0^t h(I_s^N) ds\Big)\Big| = \sup_{0\leq t \leq T} \Big| \frac 1N Y\Big(N \int_0^t h(I_s^N) ds\Big) - \int_0^t h(I_s^N) ds\Big|
    \\ & \leq \sup_{0\leq t \leq T||h||} \Big|\frac 1N (Y(Nt) - Nt)\Big| = 0.
\end{aligned}
\end{equation}
For \eqref{eq:cor12}, by the law of large numbers, $\frac 1N \sum_{j=1}^N \bar Z^j_T \xrightarrow{N\to\infty} \int_0^T h(I_t) dt$, as well as $h(I^N) \xrightarrow{N\to\infty} h(I)$ in $L^2$, uniformly on compact time intervals by Theorem~\ref{T1}.1, therefore
\begin{align*}
    \frac 1N \sum_{j=1}^N\Big( Z^{N,j} - \bar Z^j\Big) & = 
    \frac 1N \sum_{j=1}^N\Big( Z^{N,j} - \int_0^. h(I_s^N) ds\Big) \\ & \qquad \qquad - 
    \frac 1N \sum_{j=1}^N\Big( \bar Z^{j} - \int_0^. h(I_s)ds\Big)
     + \int_0^. (h(I_s^N) - h(I_s)) ds \xrightarrow{N\to\infty} 0.
\end{align*}
For \eqref{eq:cor13}, we write, with the same Poisson process as in \eqref{eq:cor11b}, and with the Brownian motions $B'$ arising as limit of the compensated Poisson process $Y$, and some $\widetilde B$,
\begin{equation}\label{eq:s3}
\begin{aligned}
\frac{1}{\sqrt N}& \sum_{j=1}^N
\Big(Z^{N,j}_t - \int_0^t h(I_s^N)ds\Big) 
= \frac{1}{\sqrt N} \Big(Y\Big( N\int_0^t h(I_s^N)ds\Big) - N\int_0^t h(I_s^N)ds\Big) 
\\ & = B'\Big(\int_0^t h(I_s)ds\Big) + o(1) =  \int_0^t \sqrt{h(I_s)}d\widetilde B_s + o(1).
\end{aligned}
\end{equation}
Next, for \eqref{eq:cor14}, with $K$ as in Theorem~\ref{T1}.3,
\begin{align*}
\sqrt N (h(I_t^N) - h(I_t)) \Big)=\sqrt N h'(I_t) (I_t^N - I_t) + o(1) = h'(I_t) K_t +  o(1).
\end{align*}
Last, we note that the left hand side of \eqref{eq:38} is a sum of the left hand sides of \eqref{eq:cor13} and \eqref{eq:cor14}. We obtain the result by summing the limits of these equations, once we determine the correlation structure of the martingales $\int_0^. \sqrt{h(I_s)} dB_s$ in \eqref{eq:SDE_K} and $\int_0^. \sqrt{h(I_s)} d\widetilde B_s$ in \eqref{eq:cor13} on a joint probability space. We will show that joint weak convergence of \eqref{eq:cor13} and \eqref{eq:cor14} holds if 
\begin{align}
    \label{eq:corr1}
    \mathbb E[\int_0^t \sqrt{h(I_s)} dB_s \int_0^t \sqrt{h(I_s)} d\widetilde B_s] = (2p-1)\int_0^t h(I_s)ds,    
\end{align}
implying the result. Recall from Section~\ref{ss:reformulation} the limits $B^+$ and $B^-$ of the rescaled Poisson processes $Y^+$ and $Y^-$. By the construction of $Y$ above \eqref{eq:cor11b}, we find that $B'_t = B^+_{pt} + B^-_{(1-p)t}$ is the limit of the rescaled Poisson process $Y$. In addition, $\hat B_t = B^+_{pt} - B^-_{(1-p)t}$ arises above \eqref{eq:bm}. Clearly, by independence of $B^+$ and $B^-$, we have $\mathbb E[\hat B_t B_t'] = \mathbb E[(B_{pt}^+ + B_{(1-p)t}^-)(B_{pt}^+ - B_{(1-p)t}^-)] = (2p-1)t$.

From the proof of Theorem~\ref{T1}.3, we know that the convergence in this result is even in probability if we exchange $K$ by $\widetilde K^N$ from \eqref{eq:tildeK}. Hence, using $I^N \to I$, we find for the integral appearing in \eqref{eq:SDE_K}, on a joint probability space, and taking \eqref{eq:bm} into account,
$$\int_0^t \sqrt{h(I_s)} dB_s = \lim_{N\to\infty} 
\int_0^t \sqrt{h(I^N_s)} dB^N_s = \lim_{N\to\infty} 
\hat B\Big(\int_0^t h(I^N_s)ds\Big) = \hat B\Big(\int_0^t h(I_s)ds\Big).$$
From this and \eqref{eq:s3}, we can thus write
\begin{align*}
    \mathbb E\Big[\int_0^t \sqrt{h(I_s)} dB_s \int_0^t \sqrt{h(I_s)} d\widetilde B_s\Big] & = \mathbb E\Big[\hat B\Big(\int_0^t h(I_s) ds\Big) \cdot B'\Big( \int_0^t h(I_s) ds\Big)\Big] \\ & = (2p-1)\int_0^t h(I_s) ds
\end{align*}
and we are done.
\end{proof}

\begin{proof}[Proof of Corollary \ref{cor2}]
Actually, the proof of \eqref{eq:cor11}, \eqref{eq:cor12} and \eqref{eq:cor13} is literally the same as in the proof above. 
\end{proof}

\end{document}